\documentclass[letterpaper,12pt]{article}
\usepackage[total={6in, 8in}]{geometry}

\usepackage{amsmath}
\usepackage{amssymb}
\usepackage{amsthm}

\usepackage{mathtools}

\usepackage{array}
\usepackage{multirow}
\usepackage{bbm}
\usepackage{enumitem}
\usepackage{float}
\usepackage{tikz}
\usepackage{thmtools}
\usepackage{thm-restate}

\usepackage{algorithm}
\usepackage{algorithmic}

\usepackage{xparse}

\usepackage[x11names]{xcolor}

\usepackage{hyperref}
\hypersetup{%
    colorlinks = true,
    urlcolor   = cyan,
    linkcolor  = magenta,
    citecolor  = blue%
}

\usepackage{thm-restate}

\theoremstyle{plain}
\newtheorem{theorem}{Theorem}[section]

\newtheorem{lemma}[theorem]{Lemma}
\newtheorem{example}[theorem]{Example}
\newtheorem{corollary}[theorem]{Corollary}

\theoremstyle{definition}
\newtheorem{definition}[theorem]{Definition}

\theoremstyle{remark}

\newtheorem*{remark*}{Remark}

\usepackage[capitalise]{cleveref}
\usepackage{amsmath,amsfonts,amssymb,bm,bbm,pifont}

\usepackage{amsmath}
\usepackage{xparse}
\DeclareFontFamily{U}{ntxmia}{}
\DeclareFontShape{U}{ntxmia}{m}{it}{<-> ntxmia }{}
\DeclareFontShape{U}{ntxmia}{b}{it}{<-> ntxbmia }{}
\DeclareSymbolFont{lettersA}{U}{ntxmia}{m}{it}
\SetSymbolFont{lettersA}{bold}{U}{ntxmia}{b}{it}

\ExplSyntaxOn
\NewDocumentCommand{\varmathbb}{m}
 {
  \tl_map_inline:nn { #1 }
   {
    \use:c { varbb##1 }
   }
 }
\cs_new_protected:Nn \__mathbb_define:Nn
 {
  \DeclareMathSymbol{#1}{\mathord}{lettersA}{#2}
 }
\cs_generate_variant:Nn \__mathbb_define:Nn {ce}
\tl_map_inline:nn { ABCDEFGHIJKLMNOPQRSTUVWXYZ }
 {
  \__mathbb_define:ce { varbb#1 } { \int_eval:n { `#1+67 } }
 }
\tl_map_inline:nn { abcdefghijklmnopqrstuvwxyz }
 {
  \__mathbb_define:ce { varbb#1 } { \int_eval:n { `#1+61 } }
 }
\DeclareMathSymbol{\varbbimath}{\mathord}{lettersA}{'270}
\DeclareMathSymbol{\varbbjmath}{\mathord}{lettersA}{'271}
\ExplSyntaxOff

\def\norm#1{\lVert#1\rVert}
\def\inner#1{\left\langle#1\right\rangle}

\def\bignorm#1{\left\lVert#1\right\rVert}

\def\bigopen#1{\left(#1\right)}
\def\bigset#1{\left\{#1\right\}}

\newcommand{\dd}{\mathrm{d}}
\newcommand{\dom}{\mathrm{dom} \, }

\newcommand{\vxtilde}{\widetilde{\boldsymbol{x}}}
\newcommand{\vytilde}{\widetilde{\boldsymbol{y}}}
\newcommand{\vztilde}{\widetilde{\boldsymbol{z}}}

\newcommand{\vzhat}{\widehat{\boldsymbol{z}}}

\def\1{\bm{1}}

\def\rvz{{\mathbf{z}}}

\def\va{{\bm{a}}}
\def\vb{{\bm{b}}}

\def\vg{{\bm{g}}}

\def\vw{{\bm{w}}}
\def\vx{{\bm{x}}}
\def\vy{{\bm{y}}}
\def\vz{{\bm{z}}}

\def\mM{{\bm{M}}}

\DeclareMathAlphabet{\mathsfit}{\encodingdefault}{\sfdefault}{m}{sl}
\SetMathAlphabet{\mathsfit}{bold}{\encodingdefault}{\sfdefault}{bx}{n}

\def\gL{{\mathcal{L}}}

\def\gO{{\mathcal{O}}}

\def\sZ{{\mathbb{Z}}}

\def\oA{{\varmathbb{A}}}
\def\oB{{\varmathbb{B}}}

\def\oS{{\varmathbb{S}}}
\def\oT{{\varmathbb{T}}}

\newcommand{\R}{\mathbb{R}}

\title{Nesterov Acceleration with Operator Decomposition%
\footnote{
JL was supported in part by the Krishna Kolluri Graduate Fellowship at Stanford University. 
EKR was supported by the NSF grant CCF-2504627. CY was supported by the 2025 KAIST-U.S.\ Joint Research Collaboration Open Track Project for Early-Career Researchers supported by the International Office at the Korea Advanced Institute of Science and Technology (KAIST), and the National Research Foundation of Korea (NRF) grant (No.\ RS-2023-00211352) funded by the Korean government (MSIT).}%
}
\author{Jaewook Lee%
\thanks{Department of Electrical Engineering, Stanford University
(\texttt{jwl99@stanford.edu}).}
\and Ernest K. Ryu%
\thanks{Department of Mathematics,
University of California, Los Angeles
(\texttt{eryu@math.ucla.edu}).}
\and Chulhee Yun%
\thanks{Kim Jaechul Graduate School of AI, KAIST
(\texttt{chulhee.yun@kaist.ac.kr}).}
}

\begin{document}
\maketitle

\begin{abstract}
We propose Nesterov acceleration with Operator Decomposition (NOD), which extends Nesterov's accelerated gradient descent (NAG) from smooth strongly convex optimization to the broader setting of strongly monotone, Lipschitz operators. The key insight is to  decompose the operator into cyclically monotone and monotone components, with the Asplund decomposition providing the tightest such representation, and to have the algorithm utilize the decomposed oracles. NOD and its analysis subsume the classical theory of Nesterov acceleration and yield an iteration complexity for finding an $\epsilon$-accurate solution of
\[
\Theta\left(\sqrt{\frac{L_{\phi}}{\mu} + \frac{L_{\oS}^2}{\mu^2}} \,\log \frac{1}{\epsilon}\right),
\]
where $\mu$ is the strong monotonicity parameter, $L_\phi$ is the Lipschitz constant of the cyclically monotone component, and $L_{\oS}$ is the Lipschitz constant of the (possibly acyclic) monotone remainder.
\end{abstract}

\clearpage


\section{Introduction}
\label{sec:1}

Consider the monotone inclusion problem
\begin{align*}
    \underset{\vz \in \R^{d}}{\mathrm{find}} \quad \oT (\vz) = \bm{0},
\end{align*}
where $\oT\colon \R^{d} \rightarrow \R^{d}$ is a maximal, strongly monotone, and Lipschitz operator.
This general problem class subsumes the widely studied classes of minimization problems,
\begin{align*}
    \min_{\vx\in \R^{d}} f(\vx),
\end{align*}
where $f \colon \R^{d}\to\R$ is strongly convex and has Lipschitz gradients,
and minimax optimization problems,
\begin{align*}
    \min_{\vx \in \R^{d_x}} \max_{\vy \in \R^{d_y}} \mathcal{L} (\vx, \vy),
\end{align*}
where $\mathcal{L} \colon \R^{d_x} \times \R^{d_y} \to \R$ is strongly-convex-strongly-concave (SCSC) and has Lipschitz gradients.
Finding the optimal point of the minimization problem is equivalent to finding a zero of $\oT(\vx) = \nabla f(\vx)$, and finding the (pure) Nash equilibrium of the minimax optimization problem is equivalent to finding a zero of the saddle operator
\[\oT (\vx, \vy) = \begin{bmatrix}
    \nabla_{\vx} \mathcal{L} (\vx, \vy) \\
    - \nabla_{\vy} \mathcal{L} (\vx, \vy)
\end{bmatrix}.\]

In 1970, Asplund \cite{asplund70} showed that a single-valued maximal monotone operator $\oT \colon \R^{d} \rightarrow \R^{d}$ admits an \textit{Asplund decomposition} of the form
\begin{align*}
    \oT &= \underbrace{\partial \phi}_{\text{cyclically monotone}} + 
    \underbrace{\oS}_{\text{acyclic monotone}}
\end{align*}
where $\partial \phi$ is a cyclically monotone operator, i.e., the subdifferential operator of a convex function $\phi \colon\mathbb{R}^d\rightarrow\mathbb{R}$ \cite{rockafellar66}, and $\oS$ is an \textit{acyclic monotone} operator, which is a monotone operator with only affine cyclically monotone components.
The canonical example is the saddle gradient operator of a smooth convex-concave saddle function with bilinear coupling $\mathcal{L} (\vx, \vy) = g(\vx) + \vx^{\top} \mM \vy - h(\vy)$, for which we have $\phi(\vx, \vy) = g(\vx) + h(\vy)$ and $\oS(\vx, \vy) = [\mM \vy ; \ {- \mM^{\top} \vx}]$ (as discussed further in Example~\ref{ex:2}).

However, the Asplund decomposition can also admit non-bilinear coupling terms.
Specifically, consider $\mathcal{L}  \colon \mathbb{R}^2\rightarrow\mathbb{R}$ defined by
\begin{align*}
\mathcal{L} (x, y) = x^2 - y^2 + \sin x \sin y, 
\end{align*}
which is SCSC and has Lipschitz gradients. In \cref{thm:sincoupling}, we characterize its Asplund decomposition and show that the associated acyclic monotone coupling term is non-bilinear. To the best of our knowledge, this provides the first example of a saddle operator whose Asplund decomposition features a non-skew-linear coupling term.

In this work, we introduce \textbf{N}esterov acceleration with \textbf{O}perator \textbf{D}ecomposition (NOD), which extends Nesterov’s accelerated gradient descent (NAG) to monotone inclusion problems. In particular, NOD utilizes operator decomposition into cyclically monotone and monotone components, with the Asplund decomposition serving as the tightest such decomposition.

NOD achieves accelerated convergence rates for strongly monotone-smooth inclusion problems, including SCSC minimax optimization problems. The resulting rates are analogous to recent results under bilinear coupling (BC) \cite{du2022optimal, jin22, kovalev2022accelerated}.

\section{Preliminaries} 
\label{sec:2}
We quickly introduce some standard definitions and notations related to convex functions, monotone operators, and operator decompositions.
Let $\| \cdot \|$ be the standard Euclidean norm for vectors in $\R^{d}$ and the operator norm for matrices in $\R^{d \times d}$.
For functions and operators, we follow the standard notation of the field \cite{lscomo}.

\subsection{Functions}
\label{subsubsec:functions}
We say that a function $f \colon \R^{d} \rightarrow \R\cup\{\infty\}$ is  {convex} if
\begin{align*}
    f (\lambda \vz + (1 - \lambda) \vz') &\le \lambda f (\vz) + (1 - \lambda) f (\vz'), \qquad \forall\, \vz, \vz' \in \R^{d},\, \lambda \in [0, 1].
\end{align*}
We say that $f$ is  {$\mu$-strongly convex} for $\mu > 0$ if $f(\vz) - \frac{\mu}{2} \norm{\vz}^{2}$ is convex.
We say that $f$ is  {($\mu$-strongly) concave} if $-f$ is ($\mu$-strongly) convex.

We say that $\vg \in \R^{d}$ is a  {subgradient} of a convex function $f$ at $\vz$ if
\begin{align*}
    f(\vz') &\ge f(\vz) + \inner{\vg, \vz' - \vz}, \qquad \forall \,\vz' \in \R^{d},
\end{align*}
and the subdifferential $\partial f$ at each point $\vz$ is defined as the collection of subgradients.

A function is  {closed} if the epigraph
$\operatorname{epi} f = \left\{ (x, s) \in \R^{d} \times \mathbb{R} : f(x) \le s\right\}$
is a closed set.
A function is  {proper} if its value is never $-\infty$ and is finite somewhere.
We will often focus on  {convex} functions that are  {closed and proper} and refer to those as {CCP functions}.
It is well known that a proper function is closed if and only if it is lower semicontinuous, and that the subdifferential of a CCP function is always maximal monotone \cite{minty64,moreau65}.
It is also known that if $f$ is CCP, then for all $\vz \in \operatorname{int} \operatorname{dom} f$, $\partial f (\vz)$ is a singleton iff $f$ is differentiable at $z$ (Theorem 25.1, \cite{rockafellarbook}).
Our main results will focus on $L$-smooth CCP functions, which we define below, for which $\operatorname{dom} f = \R^{d}$ and $f$ is differentiable, and thus $\partial f (\vz) = \{ \nabla f(\vz) \}$ is a singleton for all $\vz \in \R^{d}$.

We say that $f\colon \mathbb{R}^d\rightarrow\mathbb{R}$ is  {$L$-smooth} for $L > 0$ if it is differentiable and $\nabla f$ is $L$-Lipschitz, i.e.,
\begin{align*}
    \bignorm{\nabla f(\vz) - \nabla f(\vz')} \le L \bignorm{\vz - \vz'}, \qquad \forall \, \vz, \vz' \in \R^{d}.
\end{align*}
Furthermore, if $f$ is differentiable, the definition of $\mu$-strong convexity becomes equivalent to
\begin{align*}
    f(\vz') &\ge f(\vz) + \inner{\nabla f (\vz), \vz' - \vz} + \frac{\mu}{2} \norm{\vz' - \vz}^{2}, \qquad \forall \, \vz, \vz' \in \R^{d},
\end{align*}
and a convex function $f$ is $L$-smooth if and only if
\begin{align*}
    f(\vz') &\le f(\vz) + \inner{\nabla f (\vz), \vz' - \vz} + \frac{L}{2} \norm{\vz' - \vz}^{2}, \qquad \forall \, \vz, \vz' \in \R^{d}.
\end{align*}

For (unconstrained) minimax optimization problems,  we write $\vz = (\vx, \vy) \in \R^{d_x}\times\R^{d_y}$ for $\vx \in \R^{d_x}$ and $\vy \in \R^{d_y}$.
We say that a function $\mathcal{L} \colon \R^{d_x} \times \R^{d_y} \rightarrow \R$ is $(\mu_{x}, \mu_{y})$-strongly-convex-strongly-concave (SCSC) for $\mu_{x}, \mu_{y} > 0$ if $\mathcal{L} (\cdot, \vy)$ is $\mu_{x}$-strongly convex for all $\vy \in \R^{d_y}$ and $\mathcal{L} (\vx, \cdot)$ is $\mu_{y}$-strongly concave for all $\vx \in \R^{d_x}$.
Also, we say that $\mathcal{L} (\vx, \vy)$ is $(L_{x}, L_{y}, L_{xy})$-smooth if $\nabla_{\vx} \mathcal{L} (\cdot, \vy)$ is $L_{x}$-Lipschitz for all $\vy \in \R^{d_y}$, $\nabla_{\vy} \mathcal{L} (\vx, \cdot)$ is $L_{y}$-Lipschitz for all $\vx \in \R^{d_x}$, and
\begin{align*}
\begin{aligned}    
    \| \nabla_{\vx} \mathcal{L} (\vx, \vy') - \nabla_{\vx} \mathcal{L} (\vx, \vy) \| &\le L_{xy} \| \vy' - \vy \| \\
    \| \nabla_{\vy} \mathcal{L} (\vx', \vy) - \nabla_{\vy} \mathcal{L} (\vx, \vy) \| &\le L_{xy} \| \vx' - \vx \|
    \end{aligned}
    \qquad\quad
\forall\,\vx, \vx' \in \R^{d_x},\,\vy, \vy' \in \R^{d_y}.
\end{align*}
We say $\mathcal{L} $ is $\mu$-SCSC if it is $(\mu, \mu)$-SCSC and $L$-smooth if the saddle operator is $L$-Lipschitz in $\R^{d_x}\times\R^{d_y}$.
An $(L_x, L_y, L_{xy})$-smooth function is $L$-smooth for
\begin{align*}
    L = \left\|
    \begin{bmatrix}
        L_x & L_{xy} \\
        L_{xy} & L_y
    \end{bmatrix}
    \right\|.
\end{align*}

\subsection{Operators}
A set-valued operator $\oT\colon \R^{d} \rightrightarrows \R^{d}$ maps each point $\vz \in \R^{d}$ to a set $\oT(\vz) \subseteq \R^{d}$.
The domain of an operator $\oT$ is defined as $\dom \oT = \{ \vz : \oT(\vz) \ne \emptyset \}$, and $\oT$ is of full domain if $\dom \oT = \R^{d}$.
We say $\oT\colon \R^{d} \rightrightarrows \R^{d}$ is  {monotone} if
\begin{align*}
    \langle \vw - \vw', \vz - \vz' \rangle &\ge 0, \qquad \forall \, 
    \vz,\vz'\in \mathbb{R}^d,\,\vw\in \oT(\vz),\,\vw'\in\oT(\vz').
\end{align*}
We say $\oT\colon \R^{d} \rightrightarrows \R^{d}$ is  {maximal monotone} if its graph has no proper monotone extension.
We say $\oT\colon \R^{d} \rightrightarrows \R^{d}$ is  {$\mu$-strongly monotone} if $\mu > 0$ and
\begin{align*}
    \langle \vw - \vw', \vz - \vz' \rangle &\ge \mu \norm{\vz - \vz'}^{2}, \qquad\forall \, 
    \vz,\vz'\in \mathbb{R}^d,\,\vw\in \oT(\vz),\,\vw'\in\oT(\vz').
\end{align*}
We say $\oT\colon \R^{d} \rightrightarrows \R^{d}$ is $L$-Lipschitz if $L > 0$ and
\begin{align*}
    \norm{\vw - \vw'} &\le L \norm{\vz - \vz'}, \qquad \forall \, 
    \vz,\vz'\in \mathbb{R}^d,\,\vw\in \oT(\vz),\,\vw'\in\oT(\vz').
\end{align*}
In order for a possibly set-valued operator of full domain to be $L$-Lipschitz, it must be single-valued, i.e., $\oT (\vz)$ must be a singleton for all $\vz\in \mathbb{R}^d$.
We write $\oT\colon \R^{d} \rightarrow \R^{d}$ and $\oT(\vz) \in \R^{d}$ for single-valued operators with a slight abuse of notation.
If an operator $\oS$ is Lipschitz, we will sometimes write $L_{\oS}$ to denote its Lipschitz constant. 

If $\oT$ is maximal strongly monotone, then it must have a unique zero $\vz_{\star}$ by the Minty surjectivity theorem \cite{browder68,minty62}.
This includes the case when $\oT$ is the saddle gradient of a SCSC function defined on $\R^{d_x}\times\R^{d_y}$, which is single-valued, continuous, and of full domain, which implies that $\oT$ is maximal \cite{browder68}.
Our main results will focus on maximal strongly monotone, Lipschitz operators $\oT$ of full domain.

\subsection{Operator decompositions}
\label{subsubsec:decomp}

Now we formally define \textit{acyclic monotone} operators and the Asplund decomposition of monotone operators.
\begin{definition}
    We say that a monotone operator $\oS \colon \mathbb{R}^d \rightrightarrows \mathbb{R}^d$ is \emph{acyclic monotone} if whenever $\oS = \partial k + \oA$ for some CCP $k \colon \mathbb{R}^d \rightarrow \mathbb{R} \cup \{\infty\}$ and monotone $\oA \colon \mathbb{R}^d \rightrightarrows \mathbb{R}^d$, then $k$ is affine on $\operatorname{dom} \oS$.
\end{definition}
The $+$ operation between operators is defined as Minkowski sums, i.e., the operator $\oA + \oB \colon \R^{d} \rightrightarrows \R^{d}$ such that $(\oA + \oB) (\vz) = \{ \va + \vb : \va \in \oA (\vz), \, \vb \in \oB (\vz) \}$.
\begin{definition}
    For a maximal monotone $\oT \colon \mathbb{R}^d \rightrightarrows \mathbb{R}^d$, an \emph{Asplund decomposition} is a decomposition of the form $\oT = \partial \phi + \oS$ such that $\phi \colon \mathbb{R}^d \rightarrow \mathbb{R} \cup \{\infty\}$ is CCP and $\oS \colon \mathbb{R}^d \rightrightarrows \mathbb{R}^d$ is acyclic monotone.
\end{definition}
\begin{theorem}[Theorem 1, \cite{asplund70}]
    \label{thm:asplund}
    For any single-valued, maximal monotone operator $\oT \colon \mathbb{R}^d \rightarrow \mathbb{R}^d$ with $\operatorname{int} \operatorname{dom} \oT \ne \emptyset$, an \emph{Asplund decomposition} exists.
\end{theorem}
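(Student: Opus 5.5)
The plan is a maximality argument via Zorn's lemma on the family of ``convex parts'' that can be split off from $\oT$. Fix a point $\vz_0 \in \operatorname{int}\operatorname{dom}\oT$. Consider the collection
\[
\mathcal{F} = \{ k \colon \mathbb{R}^d \to \mathbb{R}\cup\{\infty\} \text{ CCP} : k(\vz_0)=0,\ \partial k(\vz_0)\ni \bm{0},\ \oT - \partial k \text{ monotone on } \operatorname{dom}\oT \}.
\]
Here $\oT - \partial k$ is understood pointwise on $\operatorname{dom}\oT$ as the set $\{\oT(\vz) - \vg : \vg \in \partial k(\vz)\}$. The normalization at $\vz_0$ removes the affine ambiguity; this is why acyclicity is only required up to affine functions. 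We order $\mathcal{F}$ by $k_1 \preceq k_2$ iff $k_2 - k_1$ is convex on $\operatorname{dom}\oT$, i.e.\ $\oT-\partial k_2$ is obtained from $\oT - \partial k_1$ by removing a further cyclically monotone piece. The trivial function $k\equiv 0$ (restricted to the domain) lies in $\mathcal{F}$, so $\mathcal{F}\neq\emptyset$.

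The key step is showing that every chain in $\mathcal{F}$ has an upper bound. For a chain $(k_\alpha)$, the functions are pointwise nondecreasing in $\alpha$ near $\vz_0$, since differences are convex with zero value and a zero subgradient at $\vz_0$, hence nonnegative. So the natural candidate is $k^\ast = \sup_\alpha k_\alpha$, which is CCP as a supremum of CCP functions. The work is to show $k^\ast$ is finite on $\operatorname{int}\operatorname{dom}\oT$ and that $\oT - \partial k^\ast$ remains monotone. For the bound I would use the cyclic-monotonicity/Rockafellar antiderivative estimate: monotonicity of $\oT - \partial k_\alpha$ gives, along segments from $\vz_0$,
\[
k_\alpha(\vz) \le \int_0^1 \langle \oT(\vz_0 + t(\vz-\vz_0)), \vz-\vz_0\rangle\,\mathrm{d}t,
\]
which is uniform in $\alpha$. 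This is because the one-dimensional restriction of a monotone operator minus $\partial k_\alpha$ is monotone, so $\tfrac{\mathrm{d}}{\mathrm{d}t}k_\alpha$ is dominated by $\langle \oT(\cdot),\vz-\vz_0\rangle$; single-valuedness and local boundedness of $\oT$ on the interior make the integral finite. Monotonicity of $\oT - \partial k^\ast$ then follows by a limiting argument: subgradients of $k_\alpha$ converge to subgradients of $k^\ast$ on the interior by standard epi-convergence (Attouch) results, and monotonicity inequalities pass to limits.

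By Zorn's lemma there is a maximal $\phi \in \mathcal{F}$. Set $\oS = \oT - \partial\phi$; by construction it is monotone. To see it is acyclic, suppose $\oS = \partial k + \oA$ with $k$ CCP and $\oA$ monotone. After subtracting an affine function we may normalize $k$ at $\vz_0$. Then $\phi + k \in \mathcal{F}$ with $\phi \preceq \phi + k$, so maximality forces $k$ to be affine on $\operatorname{dom}\oS$. Finally, $\oT = \partial\phi + \oS$, using the sum rule $\partial(\phi)+\partial k\subseteq\partial(\phi+k)$, which is an equality on the interior.

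The main obstacle is the chain step: the uniform upper bound on the $k_\alpha$ and the preservation of monotonicity in the limit. I would first work on a compact convex neighbourhood inside $\operatorname{int}\operatorname{dom}\oT$, where $\oT$ is bounded, and then exhaust the interior. Boundary points of the domain need separate care, for instance by taking closures of $k^\ast$.
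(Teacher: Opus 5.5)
The paper does not prove this theorem. It cites Asplund and remarks only that the proof rests on Zorn's lemma. Your outline is that classical argument: Zorn's lemma over normalized convex parts of $\oT$, ordered by convexity of differences, with chain bounds from local boundedness of $\oT$. As set up, however, it has two genuine gaps.

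First, $\mathcal{F}$ is too large. With extended-valued $k$ the order ``$k_2-k_1$ convex'' is ill-defined ($\infty-\infty$), and degenerate members enter. For example, $\iota_{\{\vz_0\}}\in\mathcal{F}$: its subdifferential is $\R^d$ at $\vz_0$ and empty elsewhere, so $\oT-\partial\iota_{\{\vz_0\}}$ is trivially monotone. Moreover $\iota_{\{\vz_0\}}-k=\iota_{\{\vz_0\}}$ is convex for every real-valued $k\in\mathcal{F}$, so this element dominates all of them. Zorn's lemma may then return $\phi=\iota_{\{\vz_0\}}$, for which $\partial\phi+\oS$ is empty off $\vz_0$. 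You must require $k$ to be real-valued on $\operatorname{dom}\oT$; your segment estimate is exactly what keeps $k^\ast$ inside that smaller family.

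Your worry about boundary points is moot. For every $\bm{n}\in N_{\overline{\operatorname{dom}\oT}}(\vz)$, the pair $(\vz,\oT(\vz)+\bm{n})$ is monotonically related to the graph of $\oT$. So maximality and single-valuedness force $N_{\overline{\operatorname{dom}\oT}}(\vz)=\{\bm{0}\}$ on $\operatorname{dom}\oT$. Using $\operatorname{int}\overline{\operatorname{dom}\oT}\subseteq\operatorname{dom}\oT$, it follows that $\operatorname{dom}\oT$ is open and convex, and $\oT$ is continuous on it.

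Second, you never show that members of $\mathcal{F}$ are differentiable on $\operatorname{dom}\oT$, yet the argument uses this three times. It follows as in \Cref{lem:kissmooth}, with continuity of $\oT$ in place of Lipschitzness. Take $\vg,\vg'\in\partial k(\vz)$, a unit vector $\vw$, $t>0$ and $\vb\in\partial k(\vz+t\vw)$. Then both $\langle\vb-\vg,\vw\rangle$ and $\langle\vb-\vg',\vw\rangle$ lie in $[0,\langle\oT(\vz+t\vw)-\oT(\vz),\vw\rangle]$, which shrinks to $0$, so $\vg=\vg'$. Without this lemma:
(a) your step ``convex difference with zero value and a zero subgradient at $\vz_0$, hence nonnegative'' is false in general. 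The functions $k_1(x)=|x|$ and $k_2(x)=|x|+x/2$ have $0\in\partial k_1(0)\cap\partial k_2(0)$ and $k_2-k_1$ convex, yet $k_2<k_1$ for $x<0$.
(b) The same pair satisfies $k_1\preceq k_2\preceq k_1$, so $\preceq$ is not antisymmetric.
(c) The Minkowski sum $\partial\phi(\vz)+(\oT(\vz)-\partial\phi(\vz))$ strictly contains $\{\oT(\vz)\}$ wherever $\partial\phi(\vz)$ is not a singleton, so $\oT=\partial\phi+\oS$ would fail.
The sum rule you cite is not needed at that last step. It is needed in the acyclicity step, to identify $\oT-\partial(\phi+k)$ with $\oA$.

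Smaller repairs:
\begin{itemize}
\item The segment bound should read $k_\alpha(\vz)\le\int_0^1\langle\oT(\vz_0+t(\vz-\vz_0))-\oT(\vz_0),\vz-\vz_0\rangle\,\mathrm{d}t$. As written it already fails for constant $\oT\neq\bm{0}$.
\item You must check $k_\alpha\preceq k^\ast$. This holds because $k^\ast-k_\alpha=\sup_{\beta\succeq\alpha}(k_\beta-k_\alpha)$.
\item Attouch's theorem concerns sequences, and your chain may be uncountable. Replace it by an increasing sequence attaining the supremum on a countable dense set; by continuity of finite convex functions this sequence has the same supremum $k^\ast$.
\end{itemize}
With these repairs the Zorn argument goes through.
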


If $\oT \colon \mathbb{R}^d \to \mathbb{R}^d$ is maximal monotone and $L$-Lipschitz for some $L < \infty$ and we have the decomposition (not necessarily Asplund)
\[
\oT = \partial \phi + \oS
\]
with CCP $\phi \colon \mathbb{R}^d \rightarrow \mathbb{R} \cup \{\infty\}$ and maximal monotone $\oS \colon \mathbb{R}^d \rightrightarrows \mathbb{R}^d$, we can show that $\phi$ is, in fact, $L$-smooth and thus differentiable (as we show in \Cref{lem:kissmooth} in the appendix), and $\oS$ is $2L$-Lipschitz as it is the difference of two $L$-Lipschitz operators. 
If we further assume that $\oT$ is $\mu$-strongly monotone, then there is an Asplund decomposition $\oT = \partial \phi + \oS$ such that $\phi$ is $\mu$-strongly convex, obtained by finding an Asplund decomposition of $\oT - \mu \cdot \mathrm{id}$, where $\mathrm{id}$ is the identity operator.

We note that the proof of \Cref{thm:asplund} is based on the application of Zorn's lemma, and thus is non-constructive.
To the best of our knowledge, there is no known nontrivial class of monotone or saddle operators for which the Asplund decomposition can be written explicitly, beyond saddle operators induced by convex--concave functions \textit{with bilinear coupling}.

Interestingly, we identify a saddle operator outside this class whose Asplund decomposition admits a closed-form expression.
\begin{theorem}
    \label{thm:sincoupling}
    Let $\mathcal{L} \colon \R^2 \rightarrow \R$ be
    \begin{align*}
        \mathcal{L} (x, y) &= x^2 - y^2 + \sin x \sin y.
    \end{align*}
    Then, the saddle gradient of $\gL$,
    \begin{align*}
        \oT (x, y) = \begin{bmatrix}
            \partial_x \mathcal{L} (x, y) \\
            - \partial_y \mathcal{L} (x, y)
        \end{bmatrix}
        &= 
        \begin{bmatrix}
            2x + \cos x \sin y \\
            2y - \sin x \cos y
        \end{bmatrix},
    \end{align*}
    has an Asplund decomposition of the form $\oT = \nabla \phi + \oS$, where
    \begin{align*}
        \phi(x, y) &:= h(x) + h(y), \qquad \qquad h(w) := \int_0^w \int_0^t (2 - \lvert \sin s \rvert) \, \dd s \, \dd t, \\
        \oS (x, y) &:= \oT (x, y) - \nabla \phi(x, y).
    \end{align*}
\end{theorem}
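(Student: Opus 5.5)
The plan is to verify the three defining properties in turn: $\phi$ is CCP, $\oS$ is monotone, and $\oS$ is acyclic. The first two are computations. The third is where the real work lies, and I am not certain my route for it closes.

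\textbf{Step 1: $\phi$ is convex and smooth.} Since $h''(w) = 2 - \lvert \sin w\rvert \in [1,2]$, the function $h$ is $1$-strongly convex and $2$-smooth. Hence $\phi(x,y) = h(x)+h(y)$ is a finite $C^1$ convex function, so it is CCP. Its gradient $\nabla\phi = (h'(x), h'(y))$ is Lipschitz, with a.e. Jacobian $\operatorname{diag}(2-\lvert\sin x\rvert,\, 2-\lvert \sin y\rvert)$.

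\textbf{Step 2: $\oS$ is monotone.} The operator $\oS = \oT - \nabla\phi$ is Lipschitz, so by Rademacher and integration along segments it suffices to show that the symmetric part of $D\oS$ is positive semidefinite almost everywhere. A direct computation gives
\[
D\oT(x,y) = \begin{bmatrix} 2-\sin x\sin y & \cos x\cos y\\ -\cos x\cos y & 2-\sin x\sin y\end{bmatrix}.
\]
The off-diagonal part of $D\oT$ is skew, so
\[
\tfrac12\bigl(D\oS + D\oS^{\top}\bigr) = \operatorname{diag}\bigl(\lvert\sin x\rvert - \sin x\sin y,\ \lvert\sin y\rvert - \sin x\sin y\bigr).
\]
Both entries are nonnegative because $\sin x\sin y \le \lvert \sin x\rvert\lvert\sin y\rvert \le \min(\lvert\sin x\rvert,\lvert\sin y\rvert)$. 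One subtlety is that the Jacobian fails to exist on a null set of lines, since $h''$ has kinks at multiples of $\pi$. I would handle this by applying Fubini to almost every segment, then using continuity of $\oS$ to extend the inequality $\langle \oS(\vz)-\oS(\vz'),\vz-\vz'\rangle\ge 0$ to all pairs.

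\textbf{Step 3: acyclicity (main obstacle).} Suppose $\oS = \nabla k + \oA$ with $k$ CCP and $\oA$ monotone; I must show $k$ is affine. Because $\oS$ is Lipschitz, \Cref{lem:kissmooth} gives that $k$ is smooth, so $\oA = \oS - \nabla k$ is single-valued and monotone. Monotonicity of $\oA$ together with monotonicity of $\nabla k$ yields
\[
0\le \langle \nabla k(\vz)-\nabla k(\vz'),\vz-\vz'\rangle \le \langle \oS(\vz)-\oS(\vz'),\vz-\vz'\rangle .
\]
Consequently, wherever $\oS$ is flat in a direction, $k$ must be affine along it. A pointwise Hessian comparison $\nabla^2 k \preceq \operatorname{sym} D\oS$ alone cannot force $\nabla^2 k=0$, since the right side is positive definite off a null set. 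My first attempt is therefore to exploit the exact zero set of the quadratic form $v^{\top}\operatorname{sym}D\oS\,v = \lvert\sin x\rvert v_1^2+\lvert\sin y\rvert v_2^2-\sin x\sin y\,\lVert v\rVert^2$.

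For example, on $y = \pi/2+2\pi m$ with $x\in[2\pi n,2\pi n+\pi]$ the first entry vanishes identically. The displayed inequality then makes $\langle\nabla k(\vz)-\nabla k(\vz'),\vz-\vz'\rangle=0$ along such segments, so $k$ is affine on each of them, and symmetric families arise for $y=-\pi/2$ and in the vertical direction. I would then try to propagate these constraints to all of $\R^2$. The tools I would try are the following:
\begin{enumerate}[label=(\roman*)]
\item convexity of $k$, which forces affinity on convex hulls when the gradients agree at endpoints;
\item the $2\pi$-periodicity of $\operatorname{sym}D\oS$;
\item the fact that $\oA$ must remain monotone, not merely have PSD symmetric Jacobian. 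This gives a nonlocal condition that couples far-apart points through the skew rotational part $\cos x\cos y$. I could use it by testing the inequality on pairs of points at large separation, where the skew part contributes nothing but the $\nabla k$ term would accumulate.
\end{enumerate}
The hope is that these combine to show $\nabla k$ is constant. Making this propagation rigorous is the step I expect to be hardest, and I am not confident it succeeds as stated.
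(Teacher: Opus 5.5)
Your Steps 1 and 2 match the paper, apart from a sign slip: $\partial_y(2y-\sin x\cos y)=2+\sin x\sin y$, so $\mathrm{symm}\big(D\oS\big)=\mathrm{diag}\bigl(\lvert\sin x\rvert-\sin x\sin y,\ \lvert\sin y\rvert+\sin x\sin y\bigr)$. Monotonicity survives. However, the vertical degenerate segments are $x=\frac{\pi}{2}+2\pi n$, $y\in[(2m-1)\pi,2m\pi]$ and $x=-\frac{\pi}{2}+2\pi n$, $y\in[2m\pi,(2m+1)\pi]$, not where your formula puts them, and this matters below.

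The genuine gap is Step 3, which is the entire content of the theorem and which you leave open. The pointwise comparison $\nabla^2 k\preceq \mathrm{symm}\big(D\oS\big)$ that you set aside is exactly the right tool on the degenerate set. Wherever a diagonal entry of $\mathrm{symm}\big(D\oS\big)$ vanishes, $\nabla^2 k\succeq 0$ together with $\mathrm{symm}\big(D\oS\big)-\nabla^2 k\succeq 0$ forces the whole corresponding row and column of $\nabla^2 k$ to vanish. This gives $\nabla^2 k\,e_1=\bm{0}$ on your horizontal segments \emph{and on the full lines} $x=n\pi$. It also gives $\nabla^2 k\,e_2=\bm{0}$ on the vertical segments and on the lines $y=m\pi$.

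Your test with pairs of points on a flat segment falls short in two ways. First, it misses the grid lines entirely, because there the degeneracy is transversal: on $x=n\pi$ the derivative of $\oS_2$ along the line is $\lvert\sin y\rvert\neq 0$. Second, as stated it only gives $k$ affine on a segment. You need the full vector $\nabla k$ to be constant there, i.e.\ also $\partial_x\partial_y k=0$, which is available from the PSD off-diagonal bound or from cocoercivity of $\nabla k$.

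With this, the propagation is a closed-loop argument, not a large-scale one. Integrate $\nabla^2 k$ from $(0,0)$ to $(0,-\pi)$ in two ways:
\begin{enumerate}[label=(\roman*)]
\item along $x=0$, where the increment of $\nabla k$ has the form $(0,*)$;
\item through $(\frac{\pi}{2},0)$ and $(\frac{\pi}{2},-\pi)$ along $y=0$, $x=\frac{\pi}{2}$, $y=-\pi$, where the increments are $(*,0)$, $\bm{0}$, $(*,0)$.
\end{enumerate}
The complementary supports force $\nabla k(0,-\pi)=\nabla k(0,0)$. Since $t\mapsto\partial_y k(0,t)$ is nondecreasing by convexity, $\nabla k$ is constant on that whole segment. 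Doing this for every grid segment makes $\nabla k$ constant on the connected grid. A rectangle lemma then fills in each square: directional derivatives are nondecreasing and agree at the two ends.

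Since $k$ is only $C^{1,1}$ and these lines are null sets, the a.e.\ Hessian inequality says nothing on them. The paper therefore mollifies and shows that on the degenerate set the relevant diagonal entry of $\nabla^2 k_\epsilon$ is $\gO(\epsilon)$ and the off-diagonal one is $\gO(\sqrt{\epsilon})$, uniformly. It then passes to the limit in the line integrals.

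Your tool (iii) points the wrong way. $\oS$ contains $\nabla\bigl(G(x)+G(y)\bigr)$ with $G'(w)=\int_0^w\lvert\sin t\rvert\,\dd t\approx\frac{2}{\pi}w$. So at large separations $\langle\oS(\vz)-\oS(\vz'),\vz-\vz'\rangle\approx\frac{2}{\pi}\norm{\vz-\vz'}^2$, and such tests cannot exclude, e.g., $k=c\norm{\vz}^2$ with small $c>0$. Only the local degeneracy on the grid does.
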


We defer the proof of \Cref{thm:sincoupling} to \Cref{app:sincoupling}.
The proof does not utilize any high-level operator-theoretic abstractions, but is technically involved. We believe there is room to further develop operator structure theory that would simplify such questions, but we leave this direction for future work.

\begin{example}[Saddle gradients]
    \label{ex:2}
    Suppose that $\vz = (\vx, \vy) \in \R^{d_x}\times\R^{d_y}$ for $\vx \in \R^{d_x}$ and $\vy \in \R^{d_y}$, and $\mathcal{L}$ is an SCSC, smooth function with \emph{bilinear coupling}:
    \begin{align*}
        \mathcal{L} (\vx,\vy) = g(\vx) - h(\vy) + \vx^\top \mM \vy.
    \end{align*}
    Then we can decompose the saddle gradient of $\mathcal{L}$, defined by
    \begin{align*}
        \oT(\vx, \vy) = \begin{bmatrix}
            \nabla_{\vx} \mathcal{L} (\vx, \vy) \\
            - \nabla_{\vy} \mathcal{L} (\vx, \vy)
        \end{bmatrix},
    \end{align*}
    into $\oT = \nabla \phi + \oS$ with the following components,
    \begin{align*}
        \begin{aligned}
        \phi (\vx, \vy) &= g(\vx) + h(\vy), \\
        \oS (\vx, \vy) &=  \begin{bmatrix}
            \bm{0} & \mM \\
            -\mM^{\top} & \bm{0}
        \end{bmatrix} 
        \begin{bmatrix}
            \vx \\
            \vy
        \end{bmatrix}.
        \end{aligned}
    \end{align*}
    We can observe that $\phi$ is strongly convex and $\oS$ is monotone.
    In particular, $\oS$ is a
    \textit{skew linear operator,} which is known to be acyclic monotone \cite{asplund}.
    Furthermore, if $g$ is $\mu_x$-strongly convex and $L_x$-smooth, $h$ is $\mu_y$-strongly convex and $L_y$-smooth, and $\mM$ has bounded spectral norm $\| \mM \| \le L_{xy}$, then $\mathcal{L}$ is $(\mu_x, \mu_y)$-SCSC and $(L_x, L_y, L_{xy})$-smooth.
\end{example}

\section{ODE formulation}
\label{sec:3}

Before we propose and analyze the (discrete-time) algorithm NOD, we present a motivating continuous-time formulation:
\begin{align}
    \Ddot{\rvz}_{t} + 2 \sqrt{\mu} \dot{\rvz}_{t} + \nabla \phi (\rvz_{t}) + \oS \bigopen{\rvz_{t} + \frac{1}{\sqrt{\mu}} \dot{\rvz}_{t}} = \bm{0}
    \label{eq:ode}
\end{align}
for $t>0$.
(We use the notation $\rvz_{t}$ for continuous-time flows and $\vz_{k}$ for discrete-time algorithms.)
If $\oS = 0$, then \eqref{eq:ode} recovers the continuous-time ODE of NAG \cite{su16} for a strongly convex function $\phi$.
We show that the ODE in \eqref{eq:ode} enjoys exponential stability.

We assume that there exists a unique $\vz_{\star}$ such that
\[
\nabla \phi(\vz_{\star}) + \oS (\vz_{\star}) = \bm{0},
\]
which holds when $\oT = \nabla \phi + \oS$ is a maximal strongly monotone operator.

\begin{restatable}{theorem}{thmode}
    \label{thm:ode}
    Let $\phi\colon \mathbb{R}^d\rightarrow\mathbb{R}$ be a differentiable and $\mu$-strongly convex function and $\oS\colon \R^{d} \rightarrow \R^{d}$ be a (single-valued and) monotone operator.
    Let $\{\rvz_{t}\}_{t\ge 0}$ be a solution of the ODE \eqref{eq:ode} and $\vz_{\star}$ be the (unique) point satisfying $\nabla \phi(\vz_{\star}) + \oS (\vz_{\star}) = \bm{0}$.
    Then, the Lyapunov function 
    \begin{align*}
        \Psi_{t} &= \bignorm{\dot{\rvz}_{t} + \sqrt{\mu} (\rvz_{t} - \vz_{\star})}^2 + 2 \big( \phi (\rvz_{t}) - \phi (\vz_{\star}) - \langle \nabla \phi (\vz_{\star}),\rvz_{t}-\vz_{\star} \rangle \big)
    \end{align*}
    satisfies $\dot{\Psi}_{t} \le - \sqrt{\mu} \Psi_{t}$.
\end{restatable}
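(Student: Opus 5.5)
The plan is a direct differentiation of $\Psi_t$ along the trajectory. We substitute the ODE \eqref{eq:ode} for $\Ddot{\rvz}_t$ and then bound three groups of terms: one by monotonicity of $\oS$, one by $\mu$-strong convexity of $\phi$, and one by Young's inequality.

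\textbf{Notation and derivative.} Write $\vz_\star$ for the zero, and set
\[
\boldsymbol{e}_t=\rvz_t-\vz_\star,\qquad \boldsymbol{v}_t=\dot{\rvz}_t,\qquad \boldsymbol{w}_t=\boldsymbol{v}_t+\sqrt{\mu}\,\boldsymbol{e}_t,\qquad \boldsymbol{G}_t=\nabla\phi(\rvz_t)-\nabla\phi(\vz_\star).
\]
Let $D_t=\phi(\rvz_t)-\phi(\vz_\star)-\langle\nabla\phi(\vz_\star),\boldsymbol{e}_t\rangle$, so that $\Psi_t=\norm{\boldsymbol{w}_t}^2+2D_t$. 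Differentiating gives
\[
\dot\Psi_t=2\langle \boldsymbol{w}_t,\Ddot{\rvz}_t+\sqrt{\mu}\,\boldsymbol{v}_t\rangle+2\langle \boldsymbol{G}_t,\boldsymbol{v}_t\rangle .
\]
By \eqref{eq:ode}, $\Ddot{\rvz}_t+\sqrt{\mu}\boldsymbol{v}_t=-\sqrt{\mu}\boldsymbol{v}_t-\nabla\phi(\rvz_t)-\oS(\boldsymbol{u}_t)$, where $\boldsymbol{u}_t=\rvz_t+\dot{\rvz}_t/\sqrt{\mu}$.

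\textbf{Using the optimality condition.} Since $\nabla\phi(\vz_\star)+\oS(\vz_\star)=\bm 0$, we can rewrite
\[
-\nabla\phi(\rvz_t)-\oS(\boldsymbol{u}_t)=-\boldsymbol{G}_t-\bigl(\oS(\boldsymbol{u}_t)-\oS(\vz_\star)\bigr).
\]
The key observation is that $\boldsymbol{u}_t-\vz_\star=\boldsymbol{w}_t/\sqrt{\mu}$. Hence
\[
-2\langle\boldsymbol{w}_t,\oS(\boldsymbol{u}_t)-\oS(\vz_\star)\rangle=-2\sqrt{\mu}\,\langle \boldsymbol{u}_t-\vz_\star,\oS(\boldsymbol{u}_t)-\oS(\vz_\star)\rangle\le 0
\]
by monotonicity of $\oS$. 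This is exactly why $\oS$ is evaluated at the extrapolated point in \eqref{eq:ode}. I regard this as the only non-routine step; the rest is bookkeeping.

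\textbf{Remaining terms.} Using $\boldsymbol{w}_t-\boldsymbol{v}_t=\sqrt{\mu}\boldsymbol{e}_t$, the terms left over are
\[
-2\sqrt{\mu}\langle\boldsymbol{w}_t,\boldsymbol{v}_t\rangle-2\sqrt{\mu}\langle\boldsymbol{e}_t,\boldsymbol{G}_t\rangle .
\]
Substituting $\boldsymbol{v}_t=\boldsymbol{w}_t-\sqrt{\mu}\boldsymbol{e}_t$ and applying $2ab\le a^2+b^2$,
\[
-2\langle\boldsymbol{w}_t,\boldsymbol{v}_t\rangle=-2\norm{\boldsymbol{w}_t}^2+2\sqrt{\mu}\langle\boldsymbol{w}_t,\boldsymbol{e}_t\rangle\le-\norm{\boldsymbol{w}_t}^2+\mu\norm{\boldsymbol{e}_t}^2 .
\]
For the other term, strong convexity of $\phi$ applied between $\rvz_t$ and $\vz_\star$ gives $\phi(\vz_\star)\ge\phi(\rvz_t)-\langle\nabla\phi(\rvz_t),\boldsymbol{e}_t\rangle+\tfrac{\mu}{2}\norm{\boldsymbol{e}_t}^2$. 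Subtracting $\langle\nabla\phi(\vz_\star),\boldsymbol{e}_t\rangle$ from both sides yields $\langle\boldsymbol{G}_t,\boldsymbol{e}_t\rangle\ge D_t+\tfrac{\mu}{2}\norm{\boldsymbol{e}_t}^2$, so
\[
-2\langle\boldsymbol{e}_t,\boldsymbol{G}_t\rangle\le -2D_t-\mu\norm{\boldsymbol{e}_t}^2 .
\]

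\textbf{Conclusion.} Multiplying both bounds by $\sqrt{\mu}$ and adding, the $\mu\norm{\boldsymbol{e}_t}^2$ terms cancel, and together with the nonpositive $\oS$-term we obtain
\[
\dot\Psi_t\le-\sqrt{\mu}\bigl(\norm{\boldsymbol{w}_t}^2+2D_t\bigr)=-\sqrt{\mu}\,\Psi_t .
\]
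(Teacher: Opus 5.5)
Your proof is correct and follows essentially the same route as the paper: differentiate $\Psi_t$, substitute the ODE, and remove the $\oS$-term by monotonicity using $\rvz_t + \dot{\rvz}_t/\sqrt{\mu} - \vz_\star = \bigl(\dot{\rvz}_t + \sqrt{\mu}(\rvz_t - \vz_\star)\bigr)/\sqrt{\mu}$, then apply strong convexity between $\rvz_t$ and $\vz_\star$. The differences are cosmetic: you absorb $\nabla\phi(\vz_\star) = -\oS(\vz_\star)$ at the outset rather than cancelling those terms at the end, and you use Young's inequality where the paper uses the exact polarization identity, which only drops the extra $-\sqrt{\mu}\norm{\dot{\rvz}_t}^2$ that the paper also discards in its final step.
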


\begin{proof}
Observe that
\begin{align*}
    \frac{\dd}{\dd t} \bignorm{\dot{\rvz}_{t} + \sqrt{\mu} (\rvz_{t} - \vz_{\star})}^2 &= 2 \inner{\Ddot{\rvz}_{t} + \sqrt{\mu} \dot{\rvz}_{t}, \dot{\rvz}_{t} + \sqrt{\mu} (\rvz_{t} - \vz_{\star})} \\
    &{}={} - 2 \inner{\sqrt{\mu} \dot{\rvz}_{t} + \nabla \phi (\rvz_{t}), \dot{\rvz}_{t} + \sqrt{\mu} (\rvz_{t} - \vz_{\star})} \\
    &\phantom{{}={}} - 2 \inner{\oS \bigopen{\rvz_{t} + \frac{1}{\sqrt{\mu}} \dot{\rvz}_{t}}, \dot{\rvz}_{t} + \sqrt{\mu} (\rvz_{t} - \vz_{\star})}.
\end{align*}
First, we have
\begin{align*}
    &- 2 \inner{\sqrt{\mu} \dot{\rvz}_{t}, \dot{\rvz}_{t} + \sqrt{\mu} (\rvz_{t} - \vz_{\star})} = - \sqrt{\mu} \bigopen{\bignorm{\dot{\rvz}_{t} + \sqrt{\mu} (\rvz_{t} - \vz_{\star})}^2 + \bignorm{\dot{\rvz}_{t}}^2 - \mu \bignorm{\rvz_{t} - \vz_{\star}}^2}
\end{align*}
and by monotonicity of $\oS$, we have
\begin{align*}
    \inner{\oS \bigopen{\rvz_{t} + \frac{1}{\sqrt{\mu}} \dot{\rvz}_{t}}, \dot{\rvz}_{t} + \sqrt{\mu} (\rvz_{t} - \vz_{\star})} &= \sqrt{\mu} \inner{\oS \bigopen{\rvz_{t} + \frac{1}{\sqrt{\mu}} \dot{\rvz}_{t}}, \bigopen{\rvz_{t} + \frac{1}{\sqrt{\mu}} \dot{\rvz}_{t}} - \vz_{\star}} \\
    &\ge \sqrt{\mu} \inner{\oS (\vz_{\star}), \bigopen{\rvz_{t} + \frac{1}{\sqrt{\mu}} \dot{\rvz}_{t}} - \vz_{\star}} \\
    &= \sqrt{\mu} \inner{\oS (\vz_{\star}), \rvz_{t} - \vz_{\star}} + \inner{\oS (\vz_{\star}), \dot{\rvz}_{t}} \\
    &= - \sqrt{\mu} \inner{\nabla \phi (\vz_{\star}), \rvz_{t} - \vz_{\star}} - \inner{\nabla \phi (\vz_{\star}), \dot{\rvz}_{t}}
\end{align*}
and therefore
\begin{align*}
    - 2 \inner{\oS \bigopen{\rvz_{t} + \frac{1}{\sqrt{\mu}} \dot{\rvz}_{t}}, \dot{\rvz}_{t} + \sqrt{\mu} (\rvz_{t} - \vz_{\star})} &\le 2 \sqrt{\mu} \inner{\nabla \phi (\vz_{\star}), \rvz_{t} - \vz_{\star}} + 2 \inner{\nabla \phi (\vz_{\star}), \dot{\rvz}_{t}}.
\end{align*}
Also, by the chain rule we have
\begin{align*}
    \frac{\dd}{\dd t} 2 (\phi (\rvz_{t}) - \phi (\vz_{\star}) - \langle \nabla \phi (\vz_{\star}), \rvz_{t}-\vz_{\star} \rangle) &= 2 \inner{\nabla \phi (\rvz_{t}) - \nabla \phi (\vz_{\star}), \dot{\rvz}_{t}},
\end{align*}
and by strong convexity of $\phi$ we have
\begin{align*}
    - 2 \inner{\nabla \phi (\rvz_{t}), \sqrt{\mu} (\rvz_{t} - \vz_{\star})} &\le - \sqrt{\mu} \bigopen{2 \bigopen{\phi (\rvz_{t}) - \phi (\vz_{\star})} + \mu \bignorm{\rvz_{t} - \vz_{\star}}^2}.
\end{align*}
Therefore we can conclude that
\begin{align*}
    \dot{\Psi}_{t} &= \frac{\dd}{\dd t} \bigopen{\bignorm{\dot{\rvz}_{t} + \sqrt{\mu} (\rvz_{t} - \vz_{\star})}^2 + 2 \big( \phi (\rvz_{t}) - \phi (\vz_{\star}) - \langle \nabla \phi (\vz_{\star}), \rvz_{t}-\vz_{\star} \rangle \big)} \\
    &\le -\sqrt{\mu} \bigopen{\bignorm{\dot{\rvz}_{t} + \sqrt{\mu} (\rvz_{t} - \vz_{\star})}^2 + 2 \big( \phi (\rvz_{t}) - \phi (\vz_{\star}) - \langle \nabla \phi (\vz_{\star}), \rvz_{t}-\vz_{\star} \rangle \big) + \bignorm{\dot{\rvz}_{t}}^2} \\
    &\le - \sqrt{\mu} \Psi_{t},
\end{align*}
and thus $\dot{\Psi}_{t} \le - \sqrt{\mu} \Psi_{t}$ as desired.
\end{proof}

Since
$\phi$ is $\mu$-strongly convex,
\begin{align*}
    \Psi_{t} &\ge 2 \bigopen{\phi (\rvz_{t}) - \phi (\vz_{\star}) - \langle \nabla \phi (\vz_{\star}), \rvz_t - \vz_{\star} \rangle} \ge \mu \bignorm{\rvz_{t} - \vz_{\star}}^2,
\end{align*}
and by Gr\"onwall's lemma, 
\begin{align*}
    \mu \bignorm{\rvz_{t} - \vz_{\star}}^2 \le \Psi_{t} \le \Psi_{0} \cdot e^{- \sqrt{\mu} t}.
\end{align*}
Thus $\rvz_{t} \rightarrow  \vz_{\star}$ at a rate analogous to that of the continuous-time ODE of NAG \cite{su16,wilson21}.

Note that the decomposition $\oT = \nabla \phi + \oS$ need not be Asplund.
Nevertheless, the Asplund decomposition can be interpreted as one that maximally allocates curvature to $\phi$, and the resulting curvature parameter $\mu$ of $\phi$ governs the rate of convergence.

\section{Algorithm}
\label{sec:4}

We now present a discretization of \eqref{eq:ode}, which we call \textbf{N}esterov acceleration with \textbf{O}perator \textbf{D}ecomposition (NOD):
\begin{align}
    \begin{aligned}
    \vz_{k+1} &= \vztilde_{k} - \eta \bigopen{\nabla \phi (\vztilde_{k}) + \oS (\vzhat_{k})}\\
    \vztilde_{k+1} &= \vz_{k+1} + \tau (\vz_{k+1} - \vz_{k})\\
    \vzhat_{k+1} &= \vztilde_{k+1} + \theta (\vztilde_{k+1} - \vztilde_{k})
    \end{aligned}
    \label{eq:nod}
\end{align}
for $k=0,1,\dots$, where $\vz_{0} = \vztilde_{0} = \vzhat_{0}$ is the starting point
and $\tau = \frac{1 - \sqrt{\eta \mu}}{1 + \sqrt{\eta \mu}}$, $ \theta = \frac{1}{\sqrt{\eta \mu}} - 1$. We use a modified Nesterov-type momentum to compute extrapolated iterates $\vztilde$ and $\vzhat$ using momentum parameters $\tau$ and $ \theta$.
The $\vztilde$-momentum is identical to the traditional Nesterov momentum for strongly convex functions, while the $\vzhat$-momentum takes a larger extrapolation step.
In the special case of $\oS \equiv \bm{0}$, we can eliminate $\vzhat_{k}$ and NOD reduces to the classical NAG on strongly convex functions \cite{nesterov04}.
Finally, the continuous-time limit of NOD recovers the earlier ODE \eqref{eq:ode} under the usual accelerated time scaling $t \propto k \sqrt{\eta}$.

Now we present the main theorem, \cref{thm:discretize}, which proves linear convergence of NOD with an accelerated rate analogous to that of NAG.

\begin{restatable}{theorem}{thmdiscretize}
    \label{thm:discretize}
    Suppose that $\phi\colon \R^{d} \rightarrow \R$ is a $\mu$-strongly convex, $L_{\phi}$-smooth function and $\oS\colon \R^{d} \rightarrow \R^{d}$ is a monotone, $L_{\oS}$-Lipschitz operator.
    Consider the \emph{NOD} iterates as in \eqref{eq:nod}, and let $\vz_{\star}$ be the (unique) point satisfying $\nabla \phi(\vz_{\star}) + \oS (\vz_{\star}) = \bm{0}$.
    For $k \ge 2$, let us define
    \begin{align*}
        \begin{aligned}
        \Psi_{k} &= \mu \bignorm{\vztilde_{k} + \frac{1}{\sqrt{\eta \mu}} \bigopen{\vztilde_{k} - \vz_{k}} - \vz_{\star}}^2 + 2 \hat{\phi}(\vztilde_{k-1}) \\
        &\phantom{{}={}} - \eta \bignorm{\nabla \phi (\vztilde_{k-1}) + \oS (\vzhat_{k-1})}^2 + (1 - \eta \mu) \sqrt{\frac{\mu}{\eta}} \bignorm{\vztilde_{k-1} - \vz_{k-1}}^{2} \\
        &\phantom{{}={}} + B \eta (1 - \sqrt{\eta \mu}) (1 - \eta L_{\phi}) \bignorm{\nabla \phi (\vztilde_{k-2}) + \oS (\vzhat_{k-2})}^2,
        \end{aligned}
    \end{align*}
    where
    \begin{align}
        \hat{\phi} (\vz) &= \phi(\vz) - \phi(\vz_{\star}) - \langle \nabla \phi (\vz_{\star}), \vz - \vz_{\star} \rangle,
        \label{eq:affineshift}
    \end{align}
    $B = \frac{\sqrt{C}}{1-C}$,
    and $C (\approx 0.026118)$ is the unique real root of
    $1 - C - \sqrt{C} \bigopen{C + 6} = 0$.
    Then, for any choice of a positive step size with\footnote{If $L_{\oS} = 0$, then we can substitute $L_{\oS}$ with any small positive value and choose $\eta = \frac{C}{L_{\phi}}$.}
    \begin{align*}
        \eta \le C \cdot \min \bigset{\frac{1}{L_{\phi}}, \frac{\mu}{L_{\oS}^{2}}}, 
    \end{align*}
    we have $\Psi_{k+1} \le (1 - \sqrt{\eta \mu}) \Psi_{k}$ for $k \ge 2$.    
\end{restatable}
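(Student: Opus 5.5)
We will prove the contraction by a direct one-step Lyapunov computation that mimics the continuous-time proof of \Cref{thm:ode}. Throughout, write $\vg_k := \nabla \phi(\vztilde_k) + \oS(\vzhat_k)$, so that $\vz_{k+1} = \vztilde_k - \eta \vg_k$. Let $\kappa := \sqrt{\eta\mu}$.

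\textbf{Step 1: express the momentum variable.} Define
\[
\vw_k := \vztilde_k + \tfrac{1}{\kappa}(\vztilde_k - \vz_k),
\]
which plays the role of $\rvz_t + \frac{1}{\sqrt\mu}\dot\rvz_t$. Using $\tau = \frac{1-\kappa}{1+\kappa}$, we get $\vztilde_{k+1}-\vz_{k+1} = \tau(\vz_{k+1}-\vz_k)$, and a direct substitution gives the recursion
\[
\vw_{k+1} = (1-\kappa)\vw_k + \kappa\,\vztilde_k - \tfrac{\eta}{\kappa}\vg_k ,
\]
the discrete analogue of the ODE. The choice $\theta = \frac{1}{\kappa}-1$ gives
\[
\vzhat_{k} = \vztilde_k + \bigl(\tfrac1\kappa - 1\bigr)(\vztilde_k - \vztilde_{k-1}).
\]
We will show that $\vzhat_k$ equals $\vw_k$ up to a correction term proportional to $\eta(\vg_{k-1} - \ldots)$. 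This point $\vzhat_k$ is exactly where $\oS$ must be evaluated so that monotonicity can be applied against $\vw_{k}$-type quantities, as in the proof of \Cref{thm:ode}.

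\textbf{Step 2: expand $\mu\|\vw_{k+1}-\vz_\star\|^2$.} We expand this term via the recursion (convexity of $\|\cdot\|^2$ contributes the factor $1-\kappa$). We then bound the cross term $-2\eta\sqrt{\mu/\eta}\,\inner{\vg_k, \cdot}$ by splitting it into two pieces.

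For the $\nabla\phi$ part, we apply the standard NAG inequalities. Strong convexity between $\vztilde_k$ and $\vz_\star$ and between $\vztilde_k$ and $\vztilde_{k-1}$ is used in the form of the $\hat\phi$ of \eqref{eq:affineshift}. The descent lemma $\hat\phi(\vz_{k+1}) \le \hat\phi(\vztilde_k) - \eta(1-\tfrac{\eta L_\phi}{2})\inner{\nabla\phi(\vztilde_k),\vg_k}+\dots$ generates the $-\eta\|\vg_k\|^2$ term and the factor $(1-\eta L_\phi)$.

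For the $\oS$ part, we use monotonicity in the form
\[
\inner{\oS(\vzhat_k)-\oS(\vz_\star), \vzhat_k - \vz_\star} \ge 0,
\]
with $\oS(\vz_\star) = -\nabla\phi(\vz_\star)$, exactly as in \Cref{thm:ode}. The affine terms then telescope into $\hat\phi$. This leaves an error of the form
\[
\eta\inner{\oS(\vzhat_k)-\oS(\vz'),\vzhat_k-\vz'},
\]
where $\vz'$ is the point actually paired with $\vg_k$ in the expansion. That point differs from $\vzhat_k$ by $O(\eta)(\vg_{k-1}-\vg_{k-2})$-type terms.

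\textbf{Step 3: absorb the error terms.} This is the main obstacle: the discretization mismatch between $\vzhat_k$ and $\vw_k$. We bound the error by Young's inequality together with $\|\oS(\vzhat_k)-\oS(\vz')\|\le L_\oS\|\vzhat_k-\vz'\|$, producing terms of order
\[
\tfrac{\eta^2 L_\oS^2}{\eta\mu}\,\eta\|\vg_{k-1}\|^2 \le C\,\eta\|\vg_{k-1}\|^2 ,
\]
using $\eta \le C\mu/L_\oS^2$. The two extra terms in $\Psi_k$ absorb these errors: $(1-\eta\mu)\sqrt{\mu/\eta}\,\|\vztilde_{k-1}-\vz_{k-1}\|^2$ handles the momentum difference, and $B\eta(1-\kappa)(1-\eta L_\phi)\|\vg_{k-2}\|^2$ handles the lagged gradient. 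We require each negative term freed at step $k$ (a $-c\,\eta\|\vg_{k-1}\|^2$ surplus from the descent lemma, available since $\eta L_\phi\le C$) to dominate the positive error plus the $(1-\kappa)$-discounted carryover.

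\textbf{Step 4: match the coefficients.} Collecting the coefficients of $\|\vg_{k-1}\|^2$ and $\|\vg_{k-2}\|^2$ yields a scalar inequality of the form
\[
B \ge \sqrt C\,(1 + B(C+\ldots)).
\]
Its fixed point is $B=\frac{\sqrt C}{1-C}$, and the tightest admissible $C$ is the root of $1-C-\sqrt C(C+6)=0$. We will verify that this choice makes every leftover coefficient nonpositive, which gives $\Psi_{k+1}\le(1-\kappa)\Psi_k$. The requirement $k\ge2$ is only needed so that $\vg_{k-2}$ is defined.
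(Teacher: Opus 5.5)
\textbf{There is a genuine gap in Steps 2--3.} Your skeleton matches the paper's. The recursion $\vw_{k+1}=(1-\kappa)\vw_k+\kappa\vztilde_k-\frac{\eta}{\kappa}\vg_k$ is correct, and the $\nabla\phi$ part uses the same NAG inequalities. But the step that carries the theorem---what $\oS$ leaves behind after monotonicity---is misidentified, and the mechanism you describe would not close.

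Let $\vz'=(1-\kappa)\vw_k+\kappa\vztilde_k$ be the point paired with $\vg_k$. Then exactly $\vz'-\vzhat_k=(1-\kappa)\sqrt{\eta/\mu}\,\vg_{k-1}$, not $O(\eta)(\vg_{k-1}-\vg_{k-2})$. Also, contrary to Step 1, $\vzhat_k-\vw_k=-(\vztilde_k-\vz_k)-(1-\kappa)\sqrt{\eta/\mu}\,\vg_{k-1}$ contains an $O(1)$ momentum part.

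Hence, after discarding $-2\kappa\inner{\hat{\oS}(\vzhat_k),\vzhat_k-\vz_{\star}}\le 0$, the residual is the \emph{linear} term $-2\eta(1-\kappa)\inner{\hat{\oS}(\vzhat_k),\vg_{k-1}}$. It is not a monotonicity-type quantity $\eta\inner{\oS(\vzhat_k)-\oS(\vz'),\vzhat_k-\vz'}$. This term is not small. Young plus Lipschitzness leaves up to $\eta L_{\oS}^2\norm{\vzhat_k-\vz_{\star}}^2\approx C\mu\norm{\vzhat_k-\vz_{\star}}^2$, which is of the size of the leading term $\mu\norm{\vw_k-\vz_{\star}}^2$. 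The contraction has at most an $O(\kappa)$ fraction of that term to spare, so absorbing it would force $C\lesssim\sqrt{\eta\mu}$ and lose acceleration.

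\textbf{The missing idea is to telescope against the previous step.} The descent inequality from $\vztilde_{k-1}$ to $\vz_k$, weighted by $1-\kappa$, produces $-2\eta(1-\kappa)\inner{\nabla\hat{\phi}(\vztilde_{k-1}),\vg_{k-1}}$. By itself this is not a negative multiple of $\norm{\vg_{k-1}}^2$, so the ``surplus'' you invoke in Step 3 does not yet exist. Adding the residual gives
\[
-2\eta(1-\kappa)\norm{\vg_{k-1}}^2+2\eta(1-\kappa)\inner{\hat{\oS}(\vzhat_{k-1})-\hat{\oS}(\vzhat_k),\vg_{k-1}},
\]
so only the increment of $\oS$ between consecutive extrapolated points needs Lipschitz control. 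One computes $\vzhat_{k-1}-\vzhat_k=\frac{1-\kappa}{1+\kappa}(\vztilde_{k-1}-\vz_{k-1})-(1-\kappa)\sqrt{\eta/\mu}\,\vg_{k-2}+\frac{2}{1+\kappa}\sqrt{\eta/\mu}\,\vg_{k-1}$. Its $O(1)$ first component is the actual reason $\norm{\vztilde_{k-1}-\vz_{k-1}}^2$ must appear in $\Psi_k$; in your error model there would be nothing for it to absorb.

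\textbf{Constants.} The correct error structure also fixes the constants, which Step 4 does not reproduce.
\begin{itemize}
\item The $\norm{\vg_{k-1}}^2$ error has relative coefficient of order $\sqrt{\eta/\mu}\,L_{\oS}\le\sqrt{C}$, not $C$. Young cannot improve a product of $\norm{\vg_{k-1}}$ with itself.
\item $B=\sqrt{C}/(1-C)$ comes from requiring $\sqrt{\eta/\mu}\,L_{\oS}\le\sqrt{C}\le B(1-\eta L_\phi)$, not from a fixed point. For instance, $B=\sqrt{C}(1+BC)$ gives $\sqrt{C}/(1-C\sqrt{C})$.
\item The equation for $C$ is the budget $(1-6B)+B+4B=1-B$ for the freed $-(1-B)\eta(1-\kappa)(1-\eta L_\phi)\norm{\vg_{k-1}}^2$. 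Within it, the momentum cross term needs $\sqrt{\eta\mu}\,\eta L_{\oS}^2/\mu\le C\sqrt{C}=(1-6B)(1-C)$.
\end{itemize}

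\textbf{Two smaller points.} First, you need the exact identity $\norm{(1-\kappa)a+\kappa b}^2=(1-\kappa)\norm{a}^2+\kappa\norm{b}^2-\kappa(1-\kappa)\norm{a-b}^2$ rather than mere convexity. The dropped $-(1-\kappa)\sqrt{\mu/\eta}\,\norm{\vztilde_k-\vz_k}^2$, together with strong convexity between $\vztilde_k$ and $\vz_k$, is exactly what becomes the $(1-\eta\mu)\sqrt{\mu/\eta}$ term of $\Psi_{k+1}$. Second, in the descent lemma the factor $L_\phi\eta^2/2$ multiplies $\norm{\vg_k}^2$, not the inner product.
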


\begin{proof}
Define
\begin{align}
    \hat{\oS}(\vz) &= \oS(\vz) - \oS (\vz_{\star}) = \oS(\vz) + \nabla \phi (\vz_{\star}).
    \label{eq:affineshift2}
\end{align}
Then $\nabla \phi + \oS = \nabla \hat{\phi} + \hat{\oS}$, $\hat{\phi} (\vz_{\star}) = 0$, and $\nabla \hat{\phi} (\vz_{\star}) = \hat{\oS}(\vz_{\star}) = \mathbf{0}$.
Therefore we have
\begin{align*}
    \begin{aligned}
    \Psi_{k} &= \mu \bignorm{\vztilde_{k} + \frac{1}{\sqrt{\eta \mu}} \bigopen{\vztilde_{k} - \vz_{k}} - \vz_{\star}}^2 + 2 \hat{\phi}(\vztilde_{k-1}) \\
    &\phantom{{}={}} - \eta \| \nabla \hat{\phi} (\vztilde_{k-1}) + \hat{\oS} (\vzhat_{k-1}) \|^2 + (1 - \eta \mu) \sqrt{\frac{\mu}{\eta}} \bignorm{\vztilde_{k-1} - \vz_{k-1}}^{2} \\
    &\phantom{{}={}} + B \eta (1 - \sqrt{\eta \mu}) (1 - \eta L_{\phi}) \| \nabla \hat{\phi} (\vztilde_{k-2}) + \hat{\oS} (\vzhat_{k-2}) \|^2.
    \end{aligned} 
\end{align*}
The transformations in \eqref{eq:affineshift} and \eqref{eq:affineshift2} preserve the curvature modulus of $\phi$ (strong convexity and smoothness) and properties of $\oS$ (monotonicity and Lipschitzness).

Observe that the algorithm iterates satisfy
\begin{align*}
    \vztilde_{k+1} - \vz_{k} + \frac{1}{\sqrt{\eta \mu}} (\vztilde_{k+1} - \vz_{k+1})
    &= \bigopen{\frac{1 - \sqrt{\eta \mu}}{1 + \sqrt{\eta \mu}} + 1 + \frac{1}{\sqrt{\eta \mu}} \cdot \frac{1 - \sqrt{\eta \mu}}{1 + \sqrt{\eta \mu}}} (\vz_{k+1} - \vz_{k}) \\
    &= \frac{1}{\sqrt{\eta \mu}} (\vz_{k+1} - \vz_{k}).
\end{align*}
Therefore we can write
\begin{align}
    \mu \bignorm{\vztilde_{k+1} - \vz_{\star} + \frac{1}{\sqrt{\eta \mu}} (\vztilde_{k+1} - \vz_{k+1})}^2 &= \mu \bignorm{\vz_{k} - \vz_{\star} + \frac{1}{\sqrt{\eta \mu}} (\vz_{k+1} - \vz_{k})}^2. \label{eq:nag1}
\end{align}
Plugging in $\vz_{k+1} = \vztilde_{k} - \eta (\nabla \phi (\vztilde_{k}) + \oS (\vzhat_{k})) = \vztilde_{k} - \eta (\nabla \hat{\phi} (\vztilde_{k}) + \hat{\oS} (\vzhat_{k}))$ and rearranging terms, \eqref{eq:nag1} is equivalent to
\begin{align}
    &\mu \bignorm{\vz_{k} - \vz_{\star} + \frac{1}{\sqrt{\eta \mu}} (\vztilde_{k} - \eta \bigopen{\nabla \hat{\phi} (\vztilde_{k}) + \hat{\oS} (\vzhat_{k})} - \vz_{k})}^2 \notag \\
    &\begin{aligned}
    &= \mu \bignorm{\vztilde_{k} - \vz_{\star} - \sqrt{\frac{\eta}{\mu}} \nabla \hat{\phi} (\vztilde_{k}) + \bigopen{\frac{1}{\sqrt{\eta \mu}} - 1} (\vztilde_{k} - \vz_{k})}^2 + \eta \| \hat{\oS} (\vzhat_{k}) \|^2 \\
    &\phantom{{}={}} - 2 \sqrt{\eta \mu} \inner{\hat{\oS} (\vzhat_{k}), \vztilde_{k} - \vz_{\star} - \sqrt{\frac{\eta}{\mu}} \nabla \hat{\phi} (\vztilde_{k}) + \bigopen{\frac{1}{\sqrt{\eta \mu}} - 1} (\vztilde_{k} - \vz_{k})}.
    \end{aligned}
    \label{eq:nag2}
\end{align}
We can expand
\begin{align}
    &\mu \bignorm{\vztilde_{k} - \vz_{\star} - \sqrt{\frac{\eta}{\mu}} \nabla \hat{\phi} (\vztilde_{k}) + \bigopen{\frac{1}{\sqrt{\eta \mu}} - 1} (\vztilde_{k} - \vz_{k})}^2 \notag \\
    &\begin{aligned}
    &= \mu \bignorm{\vztilde_{k} - \vz_{\star}}^2 + \eta \| \nabla \hat{\phi} (\vztilde_{k}) \|^2 + 
    \bigopen{1 - \sqrt{\eta \mu}}^2 \cdot \frac{1}{\eta} \bignorm{\vztilde_{k} - \vz_{k}}^2 \\
    &\phantom{{}={}} - 2 \sqrt{\eta \mu} \langle \vztilde_{k} - \vz_{\star}, \nabla \hat{\phi} (\vztilde_{k}) \rangle - 2 \bigopen{1 - \sqrt{\eta \mu}} \langle \vztilde_{k} - \vz_{k}, \nabla \hat{\phi} (\vztilde_{k}) \rangle \\
    &\phantom{{}={}} + 2 \bigopen{1 - \sqrt{\eta \mu}} \sqrt{\frac{\mu}{\eta}} \inner{\vztilde_{k} - \vz_{k}, \vztilde_{k} - \vz_{\star}}.
    \end{aligned} \label{eq:cvxmidresult}
\end{align}
By strong convexity and smoothness of $\hat{\phi}$ and $\hat{\phi} (\vz_{\star}) = 0$, we have
\begin{align}
    - 2 \langle \vztilde_{k} - \vz_{\star}, \nabla \hat{\phi} (\vztilde_{k}) \rangle &\le - 2 \hat{\phi} (\vztilde_{k}) - \mu \bignorm{\vztilde_{k} - \vz_{\star}}^2, \label{eq:sc1} \\
    - 2 \langle \vztilde_{k} - \vz_{k}, \nabla \hat{\phi} (\vztilde_{k}) \rangle &\le 2 (\hat{\phi} (\vz_{k}) - \hat{\phi} (\vztilde_{k})) - \mu \bignorm{\vztilde_{k} - \vz_{k}}^2, \label{eq:sc2} \\
    2 ( \hat{\phi} (\vz_{k}) - \hat{\phi} (\vztilde_{k-1}) ) &\le 2 \langle \vz_{k} - \vztilde_{k-1}, \nabla \hat{\phi} (\vztilde_{k-1}) \rangle + L_{\phi} \bignorm{\vz_{k} - \vztilde_{k-1}}^2. \label{eq:smooth1}
\end{align}
Thus, $\sqrt{\eta \mu} \times \eqref{eq:sc1} + (1 - \sqrt{\eta \mu}) (\eqref{eq:sc2} + \eqref{eq:smooth1})$ yields
\begin{align*}
    &- 2\sqrt{\eta \mu} \langle \vztilde_{k} - \vz_{\star}, \nabla \hat{\phi} (\vztilde_{k}) \rangle - 2 \bigopen{1 - \sqrt{\eta \mu}} \langle \vztilde_{k} - \vz_{k}, \nabla \hat{\phi} (\vztilde_{k}) \rangle \\
    &\le - \sqrt{\eta \mu} \cdot \mu \norm{\vztilde_{k} - \vz_{\star}}^2 - 2 \sqrt{\eta \mu} \, \hat{\phi} (\vztilde_{k}) + 2 \bigopen{1 - \sqrt{\eta \mu}} ( \hat{\phi} (\vztilde_{k-1}) - \hat{\phi} (\vztilde_{k}) ) \\
    &\phantom{\le{}} - \bigopen{1 - \sqrt{\eta \mu}} \mu \norm{\vztilde_{k} - \vz_{k}}^2 + 2 \bigopen{1 - \sqrt{\eta \mu}} \langle \vz_{k} - \vztilde_{k-1}, \nabla \hat{\phi} (\vztilde_{k-1}) \rangle \\
    &\phantom{\le{}} + \bigopen{1 - \sqrt{\eta \mu}} L_{\phi} \bignorm{\vz_{k} - \vztilde_{k-1}}^2 \\
    &= - \sqrt{\eta \mu} \cdot \mu \norm{\vztilde_{k} - \vz_{\star}}^2 - 2 \hat{\phi} (\vztilde_{k}) + 2 \bigopen{1 - \sqrt{\eta \mu}} \hat{\phi} (\vztilde_{k-1}) \\
    &\phantom{\le{}} - \bigopen{1 - \sqrt{\eta \mu}} \mu \norm{\vztilde_{k} - \vz_{k}}^2 + 2 \bigopen{1 - \sqrt{\eta \mu}} \langle \vz_{k} - \vztilde_{k-1}, \nabla \hat{\phi} (\vztilde_{k-1}) \rangle \\
    &\phantom{\le{}} + \bigopen{1 - \sqrt{\eta \mu}} L_{\phi} \bignorm{\vz_{k} - \vztilde_{k-1}}^2
\end{align*}
and therefore we obtain an upper bound of \eqref{eq:cvxmidresult},
\begin{align}
    &\begin{aligned}
    &\mu \bignorm{\vztilde_{k} - \vz_{\star}}^2 + \eta \| \nabla \hat{\phi} (\vztilde_{k}) \|^2 + 
    \bigopen{1 - \sqrt{\eta \mu}}^2 \cdot \frac{1}{\eta} \bignorm{\vztilde_{k} - \vz_{k}}^2 \\
    &- \sqrt{\eta \mu} \cdot \mu \norm{\vztilde_{k} - \vz_{\star}}^2 - 2 \hat{\phi} (\vztilde_{k}) + 2 \bigopen{1 - \sqrt{\eta \mu}} \hat{\phi} (\vztilde_{k-1}) \\
    &- \bigopen{1 - \sqrt{\eta \mu}} \mu \norm{\vztilde_{k} - \vz_{k}}^2 + 2 \bigopen{1 - \sqrt{\eta \mu}} \langle \vz_{k} - \vztilde_{k-1}, \nabla \hat{\phi} (\vztilde_{k-1}) \rangle \\
    & + \bigopen{1 - \sqrt{\eta \mu}} L_{\phi} \bignorm{\vz_{k} - \vztilde_{k-1}}^2 + 2 \bigopen{1 - \sqrt{\eta \mu}} \sqrt{\frac{\mu}{\eta}} \inner{\vztilde_{k} - \vz_{k}, \vztilde_{k} - \vz_{\star}}
    \end{aligned} \notag \\
    &\begin{aligned}
    &= \mu \bigopen{1 - \sqrt{\eta \mu}} \bignorm{\vztilde_{k} - \vz_{\star}}^2 + 2 \bigopen{1 - \sqrt{\eta \mu}} \sqrt{\frac{\mu}{\eta}} \inner{\vztilde_{k} - \vz_{k}, \vztilde_{k} - \vz_{\star}} \\
    &\phantom{={}} + \bigopen{1 - \sqrt{\eta \mu}} \frac{1}{\eta} \bignorm{\vztilde_{k} - \vz_{k}}^2 - \bigopen{1 - \eta \mu} \sqrt{\frac{\mu}{\eta}} \bignorm{\vztilde_{k} - \vz_{k}}^2- 2 \hat{\phi} (\vztilde_{k}) \\
    &\phantom{={}} + 2 \bigopen{1 - \sqrt{\eta \mu}} \hat{\phi} (\vztilde_{k-1}) + 2 \bigopen{1 - \sqrt{\eta \mu}} \langle \vz_{k} - \vztilde_{k-1}, \nabla \hat{\phi} (\vztilde_{k-1}) \rangle \\
    &\phantom{={}} + \bigopen{1 - \sqrt{\eta \mu}} L_{\phi} \bignorm{\vz_{k} - \vztilde_{k-1}}^2 + \eta \| \nabla \hat{\phi} (\vztilde_{k}) \|^2
    \end{aligned} \label{eq:trent} \\
    &\begin{aligned}
    &= \mu \bigopen{1 - \sqrt{\eta \mu}} \bignorm{\vztilde_{k} - \vz_{\star} + \frac{1}{\sqrt{\eta \mu}} (\vztilde_{k} - \vz_{k})}^2  - 2 \hat{\phi} (\vztilde_{k}) + 2 \bigopen{1 - \sqrt{\eta \mu}} \hat{\phi} (\vztilde_{k-1}) \\
    &\phantom{={}} + 2 \bigopen{1 - \sqrt{\eta \mu}} \langle \vz_{k} - \vztilde_{k-1}, \nabla \hat{\phi} (\vztilde_{k-1}) \rangle + \bigopen{1 - \sqrt{\eta \mu}} L_{\phi} \bignorm{\vz_{k} - \vztilde_{k-1}}^2 \\
    &\phantom{={}} - \bigopen{1 - \eta \mu} \sqrt{\frac{\mu}{\eta}} \bignorm{\vztilde_{k} - \vz_{k}}^2 + \eta \| \nabla \hat{\phi} (\vztilde_{k}) \|^2.
    \end{aligned}
    \label{eq:alexander}
\end{align}
Note that in \eqref{eq:trent} we use
\begin{align*}
    &\bigopen{1 - \sqrt{\eta \mu}}^2 \frac{1}{\eta} \bignorm{\vztilde_{k} - \vz_{k}}^2 - \bigopen{1 - \sqrt{\eta \mu}} \mu \norm{\vztilde_{k} - \vz_{k}}^2 \\
    &= \bigopen{1 - \sqrt{\eta \mu}} \frac{1}{\eta} \bignorm{\vztilde_{k} - \vz_{k}}^2 - \bigopen{1 - \eta \mu} \sqrt{\frac{\mu}{\eta}} \bignorm{\vztilde_{k} - \vz_{k}}^2.
\end{align*}
Plugging in $\vz_{k} = \vztilde_{k-1} - \eta ( \nabla \hat{\phi} (\vztilde_{k-1}) + \hat{\oS} (\vzhat_{k-1}) )$, we have
\begin{align}
    \begin{aligned}
    &2 \bigopen{1 - \sqrt{\eta \mu}} \langle \vz_{k} - \vztilde_{k-1}, \nabla \hat{\phi} (\vztilde_{k-1}) \rangle + \bigopen{1 - \sqrt{\eta \mu}} L_{\phi} \bignorm{\vz_{k} - \vztilde_{k-1}}^2 \\
    &= - 2 \eta \bigopen{1 - \sqrt{\eta \mu}} \langle \nabla \hat{\phi} (\vztilde_{k-1}), \nabla \hat{\phi} (\vztilde_{k-1}) + \hat{\oS} (\vzhat_{k-1}) \rangle \\
    &\phantom{={}} + \eta^{2} L_{\phi} \bigopen{1 - \sqrt{\eta \mu}} \| \nabla \hat{\phi} (\vztilde_{k-1}) + \hat{\oS} (\vzhat_{k-1}) \|^{2}.
    \end{aligned} \label{eq:arnold}
\end{align}
Therefore, from \eqref{eq:nag2}, \eqref{eq:alexander}, and \eqref{eq:arnold},
\begin{align}
    &\begin{aligned}
    &\mu \bignorm{\vztilde_{k+1} - \vz_{\star} + \frac{1}{\sqrt{\eta \mu}} (\vztilde_{k+1} - \vz_{k+1})}^2 + 2 \hat{\phi} (\vztilde_{k}) \\
    &\le \bigopen{1 - \sqrt{\eta \mu}} \bigopen{\mu \bignorm{\vztilde_{k} - \vz_{\star} + \frac{1}{\sqrt{\eta \mu}} (\vztilde_{k} - \vz_{k})}^2 + 2 \hat{\phi} (\vztilde_{k-1})} \\
    &\phantom{\le{}} - 2 \eta \bigopen{1 - \sqrt{\eta \mu}} \langle \nabla \hat{\phi} (\vztilde_{k-1}), \nabla \hat{\phi} (\vztilde_{k-1}) + \hat{\oS} (\vzhat_{k-1}) \rangle \\
    &\phantom{\le{}} + \eta^{2} L_{\phi} \bigopen{1 - \sqrt{\eta \mu}} \| \nabla \hat{\phi} (\vztilde_{k-1}) + \hat{\oS} (\vzhat_{k-1}) \|^{2} \\
    &\phantom{\le{}} - \bigopen{1 - \eta \mu} \sqrt{\frac{\mu}{\eta}} \bignorm{\vztilde_{k} - \vz_{k}}^2 + \eta \| \nabla \hat{\phi} (\vztilde_{k}) \|^2 + \eta \| \hat{\oS} (\vzhat_{k}) \|^2 \\
    &\phantom{\le{}} - 2 \sqrt{\eta \mu} \inner{\hat{\oS} (\vzhat_{k}), \vztilde_{k} - \vz_{\star} - \sqrt{\frac{\eta}{\mu}} \nabla \hat{\phi} (\vztilde_{k}) + \bigopen{\frac{1}{\sqrt{\eta \mu}} - 1} (\vztilde_{k} - \vz_{k})}.
    \end{aligned} \label{eq:endofstepone}
\end{align}

Recall that
\begin{align*}
    \vzhat_{k} &= \vztilde_{k} + \bigopen{\frac{1}{\sqrt{\eta \mu}} - 1} (\vztilde_{k} - \vztilde_{k-1}).
\end{align*}
In \eqref{eq:endofstepone}, observe that the last inner product term can be rewritten as
\begin{align*}
    & - 2 \sqrt{\eta \mu} \inner{\hat{\oS} (\vzhat_{k}), \vztilde_{k} - \vz_{\star} - \sqrt{\frac{\eta}{\mu}} \nabla \hat{\phi} (\vztilde_{k}) + \bigopen{\frac{1}{\sqrt{\eta \mu}} - 1} (\vztilde_{k} - \vz_{k})} \\
    &= - 2 \sqrt{\eta \mu} \inner{\hat{\oS} (\vzhat_{k}), \vztilde_{k} - \vz_{\star} + \bigopen{\frac{1}{\sqrt{\eta \mu}} - 1} (\vztilde_{k} - \vztilde_{k-1})} \\
    &\phantom{={}} - 2 \sqrt{\eta \mu} \inner{\hat{\oS} (\vzhat_{k}), \bigopen{\frac{1}{\sqrt{\eta \mu}} - 1} (\vztilde_{k-1} - \vz_{k})} + 2 \eta \langle \hat{\oS} (\vzhat_{k}),  \nabla \hat{\phi} (\vztilde_{k}) \rangle \\
    &= - 2 \sqrt{\eta \mu} \langle \hat{\oS} (\vzhat_{k}), \vzhat_{k} - \vz_{\star} \rangle \\
    &\phantom{={}} - 2 \eta \bigopen{1 - \sqrt{\eta \mu}} \langle \hat{\oS} (\vzhat_{k}),\nabla \hat{\phi} (\vztilde_{k-1}) + \hat{\oS} (\vzhat_{k-1}) \rangle + 2 \eta \langle \hat{\oS} (\vzhat_{k}),  \nabla \hat{\phi} (\vztilde_{k}) \rangle,
\end{align*}
where we use $\vz_{k} = \vztilde_{k-1} - \eta ( \nabla \hat{\phi} (\vztilde_{k-1}) + \hat{\oS} (\vzhat_{k-1}) )$.
Hence \eqref{eq:endofstepone} is equivalent to
\begin{align*}
    &\begin{aligned}
    &\mu \bignorm{\vztilde_{k+1} - \vz_{\star} + \frac{1}{\sqrt{\eta \mu}} (\vztilde_{k+1} - \vz_{k+1})}^2 + 2 \hat{\phi} (\vztilde_{k}) \\
    &\le \bigopen{1 - \sqrt{\eta \mu}} \bigopen{\mu \bignorm{\vztilde_{k} - \vz_{\star} + \frac{1}{\sqrt{\eta \mu}} (\vztilde_{k} - \vz_{k})}^2 + 2 \hat{\phi} (\vztilde_{k-1})} \\
    &\phantom{\le{}} - 2 \eta \bigopen{1 - \sqrt{\eta \mu}} \langle \nabla \hat{\phi} (\vztilde_{k-1}), \nabla \hat{\phi} (\vztilde_{k-1}) + \hat{\oS} (\vzhat_{k-1}) \rangle \\
    &\phantom{\le{}} + \eta^{2} L_{\phi} \bigopen{1 - \sqrt{\eta \mu}} \| \nabla \hat{\phi} (\vztilde_{k-1}) + \hat{\oS} (\vzhat_{k-1}) \|^{2} \\
    &\phantom{\le{}} - \bigopen{1 - \eta \mu} \sqrt{\frac{\mu}{\eta}} \bignorm{\vztilde_{k} - \vz_{k}}^2 + \eta \| \nabla \hat{\phi} (\vztilde_{k}) \|^2 + \eta \| \hat{\oS} (\vzhat_{k}) \|^2 \\
    &\phantom{\le{}} - 2 \eta \bigopen{1 - \sqrt{\eta \mu}} \langle \hat{\oS} (\vzhat_{k}),\nabla \hat{\phi} (\vztilde_{k-1}) + \hat{\oS} (\vzhat_{k-1}) \rangle + 2 \eta \langle \hat{\oS} (\vzhat_{k}),  \nabla \hat{\phi} (\vztilde_{k}) \rangle \\
    &\phantom{\le{}} - 2 \sqrt{\eta \mu} \langle \hat{\oS} (\vzhat_{k}), \vzhat_{k} - \vz_{\star} \rangle.
    \end{aligned} 
\end{align*}

By monotonicity of $\hat{\oS}$ and $\hat{\oS} (\vz_{\star}) = \mathbf{0}$, we have $- 2 \sqrt{\eta \mu} \langle \hat{\oS} (\vzhat_{k}), \vzhat_{k} - \vz_{\star} \rangle \le 0$, and we can rearrange and complete the squares to obtain:    
\begin{align*}
    &\begin{aligned}
    &\mu \bignorm{\vztilde_{k+1} - \vz_{\star} + \frac{1}{\sqrt{\eta \mu}} (\vztilde_{k+1} - \vz_{k+1})}^2 + 2 \hat{\phi} (\vztilde_{k}) \\
    &\le \bigopen{1 - \sqrt{\eta \mu}} \bigopen{\mu \bignorm{\vztilde_{k} - \vz_{\star} + \frac{1}{\sqrt{\eta \mu}} (\vztilde_{k} - \vz_{k})}^2 + 2 \hat{\phi} (\vztilde_{k-1})} \\
    &\phantom{\le{}} - 2 \eta \bigopen{1 - \sqrt{\eta \mu}} \| \nabla \hat{\phi} (\vztilde_{k-1}) + \hat{\oS} (\vzhat_{k-1}) \|^2 \\
    &\phantom{\le{}} + \eta^{2} L_{\phi} \bigopen{1 - \sqrt{\eta \mu}} \| \nabla \hat{\phi} (\vztilde_{k-1}) + \hat{\oS} (\vzhat_{k-1}) \|^{2} \\
    &\phantom{\le{}} - \bigopen{1 - \eta \mu} \sqrt{\frac{\mu}{\eta}} \bignorm{\vztilde_{k} - \vz_{k}}^2 + \eta \| \nabla \hat{\phi} (\vztilde_{k}) + \hat{\oS} (\vzhat_{k}) \|^2 \\
    &\phantom{\le{}} + 2 \eta \bigopen{1 - \sqrt{\eta \mu}} \langle \hat{\oS} (\vzhat_{k-1}) - \hat{\oS} (\vzhat_{k}),\nabla \hat{\phi} (\vztilde_{k-1}) + \hat{\oS} (\vzhat_{k-1}) \rangle.
    \end{aligned}
\end{align*}
For later use, we rearrange again and insert some additional terms as follows:
\begin{align*}
    &\begin{aligned}
    &\mu \bignorm{\vztilde_{k+1} - \vz_{\star} + \frac{1}{\sqrt{\eta \mu}} (\vztilde_{k+1} - \vz_{k+1})}^2 + 2 \hat{\phi} (\vztilde_{k}) \\
    &- \eta \| \nabla \hat{\phi} (\vztilde_{k}) + \hat{\oS} (\vzhat_{k}) \|^2 + \bigopen{1 - \eta \mu} \sqrt{\frac{\mu}{\eta}} \bignorm{\vztilde_{k} - \vz_{k}}^2 \\
    &+ B \eta \bigopen{1 - \sqrt{\eta \mu}} \bigopen{1 - \eta L_{\phi}} \| \nabla \hat{\phi} (\vztilde_{k-1}) + \hat{\oS} (\vzhat_{k-1}) \|^2 \\
    &\le \bigopen{1 - \sqrt{\eta \mu}} \bigopen{\mu \bignorm{\vztilde_{k} - \vz_{\star} + \frac{1}{\sqrt{\eta \mu}} (\vztilde_{k} - \vz_{k})}^2 + 2 \hat{\phi} (\vztilde_{k-1})} \\
    &\phantom{\le{}} + \bigopen{1 - \sqrt{\eta \mu}} \bigopen{- \eta  \| \nabla \hat{\phi} (\vztilde_{k-1}) + \hat{\oS} (\vzhat_{k-1}) \|^2 + \bigopen{1 - \eta \mu} \sqrt{\frac{\mu}{\eta}} \bignorm{\vztilde_{k-1} - \vz_{k-1}}^2} \\
    &\phantom{\le{}} + \bigopen{1 - \sqrt{\eta \mu}} \bigopen{B \eta \bigopen{1 - \sqrt{\eta \mu}} \bigopen{1 - \eta L_{\phi}} \| \nabla \hat{\phi} (\vztilde_{k-2}) + \hat{\oS} (\vzhat_{k-2}) \|^2} \\
    &\phantom{\le{}} - (1 - B) \eta \bigopen{1 - \sqrt{\eta \mu}} \bigopen{1 - \eta L_{\phi}} \| \nabla \hat{\phi} (\vztilde_{k-1}) + \hat{\oS} (\vzhat_{k-1}) \|^2 \\
    &\phantom{\le{}} - B \eta \bigopen{1 - \sqrt{\eta \mu}}^2 \bigopen{1 - \eta L_{\phi}} \| \nabla \hat{\phi} (\vztilde_{k-2}) + \hat{\oS} (\vzhat_{k-2}) \|^2 \\
    &\phantom{\le{}} - \bigopen{1 - \sqrt{\eta \mu}} \bigopen{1 - \eta \mu} \sqrt{\frac{\mu}{\eta}} \bignorm{\vztilde_{k-1} - \vz_{k-1}}^2 \\
    &\phantom{\le{}} + 2 \eta \bigopen{1 - \sqrt{\eta \mu}} \langle \hat{\oS} (\vzhat_{k-1}) - \hat{\oS} (\vzhat_{k}),\nabla \hat{\phi} (\vztilde_{k-1}) + \hat{\oS} (\vzhat_{k-1}) \rangle.
    \end{aligned} 
\end{align*}
(Note that $\eta \le C/L_{\phi} < 1 / L_{\phi}$ implies $1 - \eta L_{\phi} \ge 1 - C > 0$.)

Recalling the definition of $\Psi_{k}$, we obtain
\begin{align}
    \begin{aligned}
    \Psi_{k+1} &\le \bigopen{1 - \sqrt{\eta \mu}} \Psi_{k} \\
    &\phantom{\le{}} - (1 - B) \eta \bigopen{1 - \sqrt{\eta \mu}} \bigopen{1 - \eta L_{\phi}} \| \nabla \hat{\phi} (\vztilde_{k-1}) + \hat{\oS} (\vzhat_{k-1}) \|^2 \\
    &\phantom{\le{}} - B \eta \bigopen{1 - \sqrt{\eta \mu}}^2 \bigopen{1 - \eta L_{\phi}} \| \nabla \hat{\phi} (\vztilde_{k-2}) + \hat{\oS} (\vzhat_{k-2}) \|^2 \\
    &\phantom{\le{}} - \bigopen{1 - \sqrt{\eta \mu}} \bigopen{1 - \eta \mu} \sqrt{\frac{\mu}{\eta}} \bignorm{\vztilde_{k-1} - \vz_{k-1}}^2 \\
    &\phantom{\le{}} + 2 \eta \bigopen{1 - \sqrt{\eta \mu}} \langle \hat{\oS} (\vzhat_{k-1}) - \hat{\oS} (\vzhat_{k}),\nabla \hat{\phi} (\vztilde_{k-1}) + \hat{\oS} (\vzhat_{k-1}) \rangle.
    \end{aligned} \label{eq:contractionone}
\end{align}

Observe that
\begin{align*}
    \vzhat_{k} &= \vztilde_{k-1} + \frac{1}{\sqrt{\eta \mu}} (\vztilde_{k} - \vztilde_{k-1}), \\
    \vzhat_{k-1} &= \vztilde_{k-1} + \bigopen{\frac{1}{\sqrt{\eta \mu}} - 1} (\vztilde_{k-1} - \vztilde_{k-2}).
\end{align*}
Therefore we have
\begin{align*}
    \vzhat_{k-1} - \vzhat_{k} &= \vzhat_{k-1} - \vztilde_{k-1} - \frac{1}{\sqrt{\eta \mu}} (\vztilde_{k} - \vztilde_{k-1}) \\
    &= \bigopen{\frac{1}{\sqrt{\eta \mu}} - 1} (\vztilde_{k-1} - \vztilde_{k-2}) - \frac{1}{\sqrt{\eta \mu}} (\vztilde_{k} - \vztilde_{k-1}) \\
    &= \bigopen{\frac{1}{\sqrt{\eta \mu}} - 1} (\vztilde_{k-1} - \vz_{k-1}) + \bigopen{\frac{1}{\sqrt{\eta \mu}} - 1} (\vz_{k-1} - \vztilde_{k-2}) \\
    &\phantom{={}} - \frac{1}{\sqrt{\eta \mu}} (\vztilde_{k} - \vz_{k}) - \frac{1}{\sqrt{\eta \mu}} (\vz_{k} - \vztilde_{k-1}).
\end{align*}
Note that
\begin{align*}
    \vz_{k-1} - \vztilde_{k-2} &= - \eta ( \nabla \hat{\phi} (\vztilde_{k-2}) + \hat{\oS} (\vzhat_{k-2}) ), \\
    \vz_{k} - \vztilde_{k-1} &= - \eta ( \nabla \hat{\phi} (\vztilde_{k-1}) + \hat{\oS} (\vzhat_{k-1}) ), \\
    \vztilde_{k} - \vz_{k} &= \frac{1 - \sqrt{\eta \mu}}{1 + \sqrt{\eta \mu}} \bigopen{\vz_{k} - \vz_{k-1}} \\
    &= \frac{1 - \sqrt{\eta \mu}}{1 + \sqrt{\eta \mu}} ( \vztilde_{k-1} - \vz_{k-1} - \eta ( \nabla \hat{\phi} (\vztilde_{k-1}) + \hat{\oS} (\vzhat_{k-1}) ) )
\end{align*}
and therefore
\begin{align*}
    \vzhat_{k-1} - \vzhat_{k} &= \bigopen{\frac{1}{\sqrt{\eta \mu}} - 1} (\vztilde_{k-1} - \vz_{k-1}) - \eta \bigopen{\frac{1}{\sqrt{\eta \mu}} - 1} ( \nabla \hat{\phi} (\vztilde_{k-2}) + \hat{\oS} (\vzhat_{k-2}) ) \\
    &\phantom{={}} - \frac{1}{\sqrt{\eta \mu}} \cdot \frac{1 - \sqrt{\eta \mu}}{1 + \sqrt{\eta \mu}} ( \vztilde_{k-1} - \vz_{k-1} - \eta ( \nabla \hat{\phi} (\vztilde_{k-1}) + \hat{\oS} (\vzhat_{k-1}) ) ) \\
    &\phantom{={}} + \sqrt{\frac{\eta}{\mu}} ( \nabla \hat{\phi} (\vztilde_{k-1}) + \hat{\oS} (\vzhat_{k-1}) ) \\
    &= \frac{1 - \sqrt{\eta \mu}}{1 + \sqrt{\eta \mu}} \bigopen{\vztilde_{k-1} - \vz_{k-1}} - \bigopen{1 - \sqrt{\eta \mu}} \sqrt{\frac{\eta}{\mu}} ( \nabla \hat{\phi} (\vztilde_{k-2}) + \hat{\oS} (\vzhat_{k-2}) ) \\
    &\phantom{={}} + \frac{2}{1 + \sqrt{\eta \mu}} \sqrt{\frac{\eta}{\mu}} ( \nabla \hat{\phi} (\vztilde_{k-1}) + \hat{\oS} (\vzhat_{k-1}) ).
\end{align*}
By Lipschitzness of $\hat{\oS}$ and the triangle inequality, we have
\begin{align*}
    \| \hat{\oS}(\vzhat_{k-1}) - \hat{\oS}(\vzhat_{k}) \| &\le L_{\oS} \bignorm{\vzhat_{k-1} - \vzhat_{k}} \\
    &\le \frac{1 - \sqrt{\eta \mu}}{1 + \sqrt{\eta \mu}} L_{\oS} \bignorm{\vztilde_{k-1} - \vz_{k-1}} \\
    &\phantom{\le{}} + \bigopen{1 - \sqrt{\eta \mu}} \sqrt{\frac{\eta}{\mu}} L_{\oS} \| \nabla \hat{\phi} (\vztilde_{k-2}) + \hat{\oS} (\vzhat_{k-2}) \| \\
    &\phantom{\le{}} + \frac{2}{1 + \sqrt{\eta \mu}} \sqrt{\frac{\eta}{\mu}} L_{\oS} \| \nabla \hat{\phi} (\vztilde_{k-1}) + \hat{\oS} (\vzhat_{k-1}) \|.
\end{align*}
Then, we have
\begin{align*}
    &2 \eta \bigopen{1 - \sqrt{\eta \mu}} \langle \hat{\oS} (\vzhat_{k-1}) - \hat{\oS} (\vzhat_{k}), \nabla \hat{\phi} (\vztilde_{k-1}) + \hat{\oS} (\vzhat_{k-1}) \rangle \\
    &\le 2 \eta \bigopen{1 - \sqrt{\eta \mu}} \| \hat{\oS} (\vzhat_{k-1}) - \hat{\oS} (\vzhat_{k}) \| \cdot \| \nabla \hat{\phi} (\vztilde_{k-1}) + \hat{\oS} (\vzhat_{k-1}) \| \\
    &\le 2 \eta \bigopen{1 - \sqrt{\eta \mu}} \frac{1 - \sqrt{\eta \mu}}{1 + \sqrt{\eta \mu}} L_{\oS} \bignorm{\vztilde_{k-1} - \vz_{k-1}} \cdot \| \nabla \hat{\phi} (\vztilde_{k-1}) + \hat{\oS} (\vzhat_{k-1}) \| \\
    &\phantom{\le{}} + 2 \eta \bigopen{1 - \sqrt{\eta \mu}}^2 \sqrt{\frac{\eta}{\mu}} L_{\oS} \| \nabla \hat{\phi} (\vztilde_{k-2}) + \hat{\oS} (\vzhat_{k-2}) \| \cdot \| \nabla \hat{\phi} (\vztilde_{k-1}) + \hat{\oS} (\vzhat_{k-1}) \| \\
    &\phantom{\le{}} + 4 \eta \frac{1 - \sqrt{\eta \mu}}{1 + \sqrt{\eta \mu}} \sqrt{\frac{\eta}{\mu}} L_{\oS} \| \nabla \hat{\phi} (\vztilde_{k-1}) + \hat{\oS} (\vzhat_{k-1}) \|^2.
\end{align*}

Now we can apply the above to \eqref{eq:contractionone}, which yields
\begin{align}
    \begin{aligned}
    \Psi_{k+1} &\le \bigopen{1 - \sqrt{\eta \mu}} \Psi_{k} \\
    &\phantom{\le{}} - (1 - B) \eta \bigopen{1 - \sqrt{\eta \mu}} \bigopen{1 - \eta L_{\phi}} \| \nabla \hat{\phi} (\vztilde_{k-1}) + \hat{\oS} (\vzhat_{k-1}) \|^2 \\
    &\phantom{\le{}} - B \eta \bigopen{1 - \sqrt{\eta \mu}}^2 \bigopen{1 - \eta L_{\phi}} \| \nabla \hat{\phi} (\vztilde_{k-2}) + \hat{\oS} (\vzhat_{k-2}) \|^2 \\
    &\phantom{\le{}} - \bigopen{1 - \sqrt{\eta \mu}} \bigopen{1 - \eta \mu} \sqrt{\frac{\mu}{\eta}} \bignorm{\vztilde_{k-1} - \vz_{k-1}}^2 \\
    &\phantom{\le{}} + 2 \eta \bigopen{1 - \sqrt{\eta \mu}} \frac{1 - \sqrt{\eta \mu}}{1 + \sqrt{\eta \mu}} L_{\oS} \bignorm{\vztilde_{k-1} - \vz_{k-1}} \cdot \| \nabla \hat{\phi} (\vztilde_{k-1}) + \hat{\oS} (\vzhat_{k-1}) \| \\
    &\phantom{\le{}} + 2 \eta \bigopen{1 - \sqrt{\eta \mu}}^2 \sqrt{\frac{\eta}{\mu}} L_{\oS} \| \nabla \hat{\phi} (\vztilde_{k-2}) + \hat{\oS} (\vzhat_{k-2}) \| \cdot \| \nabla \hat{\phi} (\vztilde_{k-1}) + \hat{\oS} (\vzhat_{k-1}) \| \\
    &\phantom{\le{}} + 4 \eta \frac{1 - \sqrt{\eta \mu}}{1 + \sqrt{\eta \mu}} \sqrt{\frac{\eta}{\mu}} L_{\oS} \| \nabla \hat{\phi} (\vztilde_{k-1}) + \hat{\oS} (\vzhat_{k-1}) \|^2.
    \end{aligned} \label{eq:contractiontwo}
\end{align}

Now we will show that the sum of the remainder terms in \eqref{eq:contractiontwo} can be bounded above by zero.
Recall that we choose $B = \frac{\sqrt{C}}{1 - C}$ and $C$ such that
\begin{align*}
    1 - C - \sqrt{C} \bigopen{C + 6} = 0.
\end{align*}

First, since
\begin{align*}
    \eta \le C \cdot \min \bigset{\frac{1}{L_{\phi}}, \frac{\mu}{L_{\oS}^2}},
\end{align*}
we have $\sqrt{\eta \mu} \le \sqrt{\eta L_{\phi}} \le \sqrt{C}$ and $\frac{\eta}{\mu} L_{\oS}^2 \le C$.
Therefore we can compute
\begin{align*}
    \sqrt{\eta \mu} \cdot \frac{\eta}{\mu} L_{\oS}^2 \le C \sqrt{C} &= \bigopen{1 - 6B} \bigopen{1 - C} \\
    &\le \bigopen{1 - 6B} \bigopen{1 - \eta L_{\phi}} \le \bigopen{1 - 6B} \cdot \frac{\bigopen{1 + \sqrt{\eta \mu}}^{3} \bigopen{1 - \eta L_{\phi}}}{1 - \sqrt{\eta \mu}}.
\end{align*}
Here, we use
\begin{align*}
    \bigopen{1 - 6B} \bigopen{1 - C} &= 1 - C - 6 B(1-C) = 1 - C - 6 \sqrt{C} = C \sqrt{C}.
\end{align*}
Thus we have
\begin{align*}
    &\bigopen{\eta \bigopen{1 - \sqrt{\eta \mu}} \frac{1 - \sqrt{\eta \mu}}{1 + \sqrt{\eta \mu}} L_{\oS}}^2 \\
    &\le \bigopen{1 - \sqrt{\eta \mu}} \bigopen{1 - \eta \mu} \sqrt{\frac{\mu}{\eta}} \cdot \bigopen{1 - 6B} \eta \bigopen{1 - \sqrt{\eta \mu}} \bigopen{1 - \eta L_{\phi}}
\end{align*}
and therefore
\begin{align}
    \begin{aligned}
    &2 \eta \bigopen{1 - \sqrt{\eta \mu}} \frac{1 - \sqrt{\eta \mu}}{1 + \sqrt{\eta \mu}} L_{\oS} \bignorm{\vztilde_{k-1} - \vz_{k-1}} \cdot \| \nabla \hat{\phi} (\vztilde_{k-1}) + \hat{\oS} (\vzhat_{k-1}) \| \\
    &\le \bigopen{1 - \sqrt{\eta \mu}} \bigopen{1 - \eta \mu} \sqrt{\frac{\mu}{\eta}} \bignorm{\vztilde_{k-1} - \vz_{k-1}}^2 \\
    &\phantom{\le{}} + \bigopen{1 - 6B} \eta \bigopen{1 - \sqrt{\eta \mu}} \bigopen{1 - \eta L_{\phi}} \| \nabla \hat{\phi} (\vztilde_{k-1}) + \hat{\oS} (\vzhat_{k-1}) \|^2.
    \end{aligned} \label{eq:amgmone}
\end{align}

Second, we can compute
\begin{align*}
    \frac{\eta}{\mu} L_{\oS}^2 &\le C = B^2 (1-C)^2 \le B^2 \bigopen{1 - \eta L_{\phi}}^2 \le B^2 \frac{\bigopen{1 - \eta L_{\phi}}^2}{1 - \sqrt{\eta \mu}}.
\end{align*}
Thus we have
\begin{align*}
    &\bigopen{\eta \bigopen{1 - \sqrt{\eta \mu}}^2 \sqrt{\frac{\eta}{\mu}} L_{\oS}}^2 \le B \eta \bigopen{1 - \sqrt{\eta \mu}}^2 \bigopen{1 - \eta L_{\phi}} \cdot B \eta \bigopen{1 - \sqrt{\eta \mu}} \bigopen{1 - \eta L_{\phi}}
\end{align*}
and therefore
\begin{align}
    \begin{aligned}
    &2 \eta \bigopen{1 - \sqrt{\eta \mu}}^2 \sqrt{\frac{\eta}{\mu}} L_{\oS} \| \nabla \hat{\phi} (\vztilde_{k-2}) + \hat{\oS} (\vzhat_{k-2}) \| \cdot \| \nabla \hat{\phi} (\vztilde_{k-1}) + \hat{\oS} (\vzhat_{k-1}) \| \\
    &\le B \eta \bigopen{1 - \sqrt{\eta \mu}}^2 \bigopen{1 - \eta L_{\phi}} \| \nabla \hat{\phi} (\vztilde_{k-2}) + \hat{\oS} (\vzhat_{k-2}) \|^2 \\
    &\phantom{\le{}} + B \eta \bigopen{1 - \sqrt{\eta \mu}} \bigopen{1 - \eta L_{\phi}} \| \nabla \hat{\phi} (\vztilde_{k-1}) + \hat{\oS} (\vzhat_{k-1}) \|^2.
    \end{aligned}
    \label{eq:amgmtwo}
\end{align}

Lastly, we can compute
\begin{align*}
    \sqrt{\frac{\eta}{\mu}} L_{\oS} &\le \sqrt{C} \le B (1-C) \le B \bigopen{1 - \eta L_{\phi}} \le B \bigopen{1 + \sqrt{\eta \mu}} \bigopen{1 - \eta L_{\phi}}.
\end{align*}
Thus we have
\begin{align*}
    4 \eta \frac{1 - \sqrt{\eta \mu}}{1 + \sqrt{\eta \mu}} \sqrt{\frac{\eta}{\mu}} L_{\oS} \le 4B \eta \bigopen{1 - \sqrt{\eta \mu}} \bigopen{1 - \eta L_{\phi}}
\end{align*}
and therefore
\begin{align}
    \begin{aligned}
        &4 \eta \frac{1 - \sqrt{\eta \mu}}{1 + \sqrt{\eta \mu}} \sqrt{\frac{\eta}{\mu}} L_{\oS} \| \nabla \hat{\phi} (\vztilde_{k-1}) + \hat{\oS} (\vzhat_{k-1}) \|^2 \\
        &\le 4B \eta \bigopen{1 - \sqrt{\eta \mu}} \bigopen{1 - \eta L_{\phi}} \| \nabla \hat{\phi} (\vztilde_{k-1}) + \hat{\oS} (\vzhat_{k-1}) \|^2.
    \end{aligned}
    \label{eq:amgmthree}
\end{align}

Adding \eqref{eq:amgmone}, \eqref{eq:amgmtwo}, and \eqref{eq:amgmthree} to \eqref{eq:contractiontwo}, we have
\begin{align*}
    \Psi_{k+1} &\le \bigopen{1 - \sqrt{\eta \mu}} \Psi_{k},
\end{align*}
which proves the theorem statement.
\end{proof}

Note that the decomposition $\oT = \nabla \phi + \oS$ need not be Asplund, as was also the case in \Cref{thm:ode}. 

The following lemma ensures that the convergence rate of $\Psi_k\rightarrow 0$ directly translates to the convergence rate of $\vztilde_{k} \rightarrow \vz_{\star}$.
Note that $\hat{\phi}(\vz)$ is nonnegative, since it is defined as the Bregman divergence between $\vz$ and $\vz_{\star}$ associated with a convex function $\phi$.

\begin{restatable}{lemma}{thmdiscretizevalid}
\label{lem:discretizevalid}

Under the same assumptions and Lyapunov function as in \cref{thm:discretize}, we have
\begin{align*}
    \Psi_{k} &\ge \frac{\mu}{2} \bignorm{\vztilde_{k} + \frac{1}{\sqrt{\eta \mu}} \bigopen{\vztilde_{k} - \vz_{k}} - \vz_{\star}}^2 + \hat{\phi} (\vztilde_{k-1}) + (1 - \eta \mu) \sqrt{\frac{\mu}{\eta}} \bignorm{\vztilde_{k-1} - \vz_{k-1}}^{2}
\end{align*}
for $k \ge 2$.
\end{restatable}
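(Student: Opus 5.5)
The plan is to treat the only negative term in $\Psi_k$, namely $-\eta\|\vg_{k-1}\|^2$ with $\vg_{j} := \nabla\hat\phi(\vztilde_{j}) + \hat\oS(\vzhat_{j})$, and show it is dominated by half of the first two terms together with the (nonnegative) $B$-term involving $\|\vg_{k-2}\|^2$. The term $(1-\eta\mu)\sqrt{\mu/\eta}\,\|\vztilde_{k-1}-\vz_{k-1}\|^2$ is kept intact on both sides. Write $\vb v_k := \vztilde_k + \frac{1}{\sqrt{\eta\mu}}(\vztilde_k - \vz_k) - \vz_\star$ (informally; in the write-up I will simply spell it out). It then suffices to prove
\begin{align*}
\eta\|\vg_{k-1}\|^2 \le \frac{\mu}{2}\|\vztilde_k + \tfrac{1}{\sqrt{\eta\mu}}(\vztilde_k-\vz_k)-\vz_\star\|^2 + \hat\phi(\vztilde_{k-1}) + B\eta(1-\sqrt{\eta\mu})(1-\eta L_\phi)\|\vg_{k-2}\|^2 .
\end{align*}

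The first step is an exact identity expressing $\vzhat_{k-1}-\vz_\star$ in terms of the Lyapunov quantities. By the identity \eqref{eq:nag1} used in the proof of \cref{thm:discretize} (shifted by one index), the bracketed vector equals $\vz_{k-1}-\vz_\star + \frac{1}{\sqrt{\eta\mu}}(\vz_k-\vz_{k-1})$. Substituting $\vz_k = \vztilde_{k-1}-\eta\vg_{k-1}$ gives $\vztilde_{k-1}-\vz_\star + \theta(\vztilde_{k-1}-\vz_{k-1}) - \sqrt{\eta/\mu}\,\vg_{k-1}$. Separately, $\vztilde_{k-1}-\vztilde_{k-2} = (\vztilde_{k-1}-\vz_{k-1}) - \eta\vg_{k-2}$. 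Combining the two, I expect
\begin{align*}
\vzhat_{k-1}-\vz_\star = \Big(\vztilde_k + \tfrac{1}{\sqrt{\eta\mu}}(\vztilde_k-\vz_k)-\vz_\star\Big) + \sqrt{\tfrac{\eta}{\mu}}\,\vg_{k-1} - \theta\eta\,\vg_{k-2}.
\end{align*}

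The second step bounds $\|\vg_{k-1}\|$ using the triangle inequality. For the $\phi$ part, $\|\nabla\hat\phi(\vztilde_{k-1})\|^2 \le 2L_\phi\hat\phi(\vztilde_{k-1})$, which follows from $L_\phi$-smoothness and the fact that $\hat\phi\ge 0$ with minimum at $\vz_\star$. For the $\oS$ part, $\|\hat\oS(\vzhat_{k-1})\| \le L_{\oS}\|\vzhat_{k-1}-\vz_\star\|$, into which I insert the identity above. The resulting term $L_{\oS}\sqrt{\eta/\mu}\,\|\vg_{k-1}\|\le\sqrt C\,\|\vg_{k-1}\|$ is moved to the left-hand side, so that $(1-\sqrt C)\|\vg_{k-1}\|$ is bounded by $\sqrt{2L_\phi\hat\phi} + L_{\oS}\|\cdot\| + L_{\oS}\theta\eta\|\vg_{k-2}\|$. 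Here I use $\eta L_\phi\le C$, $\eta L_{\oS}^2\le C\mu$, and $L_{\oS}\theta\eta \le \sqrt{\eta/\mu}\,L_{\oS}\le\sqrt C$.

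The third step squares this bound, multiplies by $\eta$, and applies Cauchy--Schwarz with three terms, $(a+b+c)^2\le 3(a^2+b^2+c^2)$. This yields coefficients of order $6C/(1-\sqrt C)^2$ on $\hat\phi$, $3C\mu/(1-\sqrt C)^2$ on the squared bracket, and $3C\eta/(1-\sqrt C)^2$ on $\|\vg_{k-2}\|^2$. With $C\approx 0.026$, the first two are comfortably below $1$ and $\mu/2$. The main obstacle is the $\vg_{k-2}$ coefficient, which must not exceed $B(1-\sqrt{\eta\mu})(1-\eta L_\phi)\ge B(1-\sqrt C)(1-C)$ with $B\approx 0.166$. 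The crude value $3C/(1-\sqrt C)^2\approx 0.11$ appears to fit, but the margin is modest. If it fails, I would use a weighted Young inequality instead, giving the $\vg_{k-2}$ term a larger weight (e.g.\ $\|\vb a+\vb b+\vb c\|^2\le(1+\epsilon^{-1})\|\vb a+\vb b\|^2+(1+\epsilon)\|\vb c\|^2$), and exploit the exact bound $\theta\eta L_{\oS}=(1-\sqrt{\eta\mu})\sqrt{\eta/\mu}\,L_{\oS}$.
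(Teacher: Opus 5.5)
Your proposal is correct and follows essentially the same route as the paper: the identity $\vzhat_{k-1}-\vz_\star = (\text{bracketed vector}) + \sqrt{\eta/\mu}\,\vg_{k-1} - \theta\eta\,\vg_{k-2}$, the bound $\|\nabla\hat\phi(\vztilde_{k-1})\|^2\le 2L_\phi\hat\phi(\vztilde_{k-1})$, Lipschitzness of $\hat{\oS}$, and absorption of the $\vg_{k-1}$ term into the left-hand side. The only difference is that you absorb at the level of norms (factor $1-\sqrt C$), whereas the paper squares first and absorbs $8\frac{\eta}{\mu}L_{\oS}^2$ (factor $1-8C$); your crude numerical check already closes the argument, since $3C/(1-\sqrt C)^2\approx 0.111 \le \sqrt C(1-\sqrt C) = B(1-\sqrt C)(1-C)\approx 0.135$, so the weighted-Young fallback is not needed.
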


\begin{proof}
We reuse the shifted function $\hat{\phi}$ and operator $\hat{\oS}$ defined in \eqref{eq:affineshift} and \eqref{eq:affineshift2}.
By $\hat{\phi}(\vz_{\star}) = 0$, $\nabla \hat{\phi}(\vz_{\star}) = \bf{0}$, and convexity and $L_{\phi}$-smoothness of $\hat{\phi}$, we have
\begin{align*}
    \frac{1}{2L_{\phi}} \| \nabla \hat{\phi} (\vztilde_{k-1}) \|^2 \le \hat{\phi} (\vztilde_{k-1}).
\end{align*}
Recall that
\begin{align*}
    &\vztilde_{k} - \vz_{\star} + \frac{1}{\sqrt{\eta \mu}} (\vztilde_{k} - \vz_{k}) \\
    &= \vztilde_{k-1} - \vz_{\star} - \sqrt{\frac{\eta}{\mu}} ( \nabla \hat{\phi} (\vztilde_{k-1}) + \hat{\oS} (\vzhat_{k-1}) ) + \bigopen{\frac{1}{\sqrt{\eta \mu}} - 1} (\vztilde_{k-1} - \vz_{k-1})
\end{align*}
as in \eqref{eq:nag1} and \eqref{eq:nag2} with $k \leftarrow k-1$.
Then we have
\begin{align*}
    &\vzhat_{k-1} - \vz_{\star} - \sqrt{\frac{\eta}{\mu}} ( \nabla \hat{\phi} (\vztilde_{k-1}) + \hat{\oS} (\vzhat_{k-1}) ) + \bigopen{\frac{1}{\sqrt{\eta \mu}} - 1} (\vztilde_{k-2} - \vz_{k-1}) \\
    &= \vzhat_{k-1} - \vz_{\star} - \sqrt{\frac{\eta}{\mu}} ( \nabla \hat{\phi} (\vztilde_{k-1}) + \hat{\oS} (\vzhat_{k-1}) ) + \bigopen{\frac{1}{\sqrt{\eta \mu}} - 1} \eta ( \nabla \hat{\phi} (\vztilde_{k-2}) + \hat{\oS} (\vzhat_{k-2}) ).
\end{align*}
Thus, by the Cauchy-Schwarz inequality and $\frac{1}{\sqrt{\eta \mu}} \ge \sqrt{\frac{1}{C} \cdot \frac{L_{\phi}}{\mu}} \ge \sqrt{\frac{L_{\phi}}{\mu}} \ge 1$ we have
\begin{align*}
    \bignorm{\vzhat_{k-1} - \vz_{\star}}^{2} &\le 4 \bignorm{\vztilde_{k} - \vz_{\star} + \frac{1}{\sqrt{\eta \mu}} (\vztilde_{k} - \vz_{k})}^2 + 4 \cdot \frac{\eta}{\mu} \| \nabla \hat{\phi} (\vztilde_{k-1}) + \hat{\oS} (\vzhat_{k-1}) \|^{2} \\
    &\phantom{{}\le} + 2 \bigopen{\frac{1}{\sqrt{\eta \mu}} - 1}^2 \eta^2 \| \nabla \hat{\phi} (\vztilde_{k-2}) + \hat{\oS} (\vzhat_{k-2}) \|^{2} \\ &\le 4 \bignorm{\vztilde_{k} - \vz_{\star} + \frac{1}{\sqrt{\eta \mu}} (\vztilde_{k} - \vz_{k})}^2 + 4 \cdot \frac{\eta}{\mu} \| \nabla \hat{\phi} (\vztilde_{k-1}) + \hat{\oS} (\vzhat_{k-1}) \|^{2} \\
    &\phantom{{}\le} + 2 \cdot \frac{\eta}{\mu} \| \nabla \hat{\phi} (\vztilde_{k-2}) + \hat{\oS} (\vzhat_{k-2}) \|^{2}.
\end{align*}
By $L_{\phi}$-smoothness of $\hat{\phi}$, $L_{\oS}$-Lipschitzness of $\hat{\oS}$, and $\nabla \hat{\phi}(\vz_{\star}) = \hat{\oS}(\vz_{\star}) = \bm{0}$, we have
\begin{align*}
    &\eta \| \nabla \hat{\phi} (\vztilde_{k-1}) + \hat{\oS} (\vzhat_{k-1}) \|^2 \\
    &\le 2 \eta \| \nabla \hat{\phi} (\vztilde_{k-1}) \|^2 + 2 \eta \| \hat{\oS} (\vzhat_{k-1}) \|^2 \\
    &\le 2 \eta \| \nabla \hat{\phi} (\vztilde_{k-1}) \|^2 + 2 \eta L_{\oS}^2 \bignorm{\vzhat_{k-1} - \vz_{\star}}^2 \\
    &\le 4 \eta L_{\phi} \, \hat{\phi} (\vztilde_{k-1}) + 8 \eta L_{\oS}^2 \bignorm{\vztilde_{k} - \vz_{\star} + \frac{1}{\sqrt{\eta \mu}} (\vztilde_{k} - \vz_{k})}^2 \\
    &\phantom{{}\le} + 8 \frac{\eta^2 L_{\oS}^2}{\mu} \| \nabla \hat{\phi} (\vztilde_{k-1}) + \hat{\oS} (\vzhat_{k-1}) \|^2 + 4 \frac{\eta^2 L_{\oS}^2}{\mu} \| \nabla \hat{\phi} (\vztilde_{k-2}) + \hat{\oS} (\vzhat_{k-2}) \|^2
\end{align*}
and hence
\begin{align}
    \begin{aligned}
    &\eta \bigopen{1 - 8 \frac{\eta}{\mu} L_{\oS}^2} \| \nabla \hat{\phi} (\vztilde_{k-1}) + \hat{\oS} (\vzhat_{k-1}) \|^2 \\
    &\le 4 \eta L_{\phi} \, \hat{\phi} (\vztilde_{k-1}) + 8 \eta L_{\oS}^2 \bignorm{\vztilde_{k} - \vz_{\star} + \frac{1}{\sqrt{\eta \mu}} (\vztilde_{k} - \vz_{k})}^2 \\
    &\phantom{{}\le} + 4 \frac{\eta^2 L_{\oS}^2}{\mu} \| \nabla \hat{\phi} (\vztilde_{k-2}) + \hat{\oS} (\vzhat_{k-2}) \|^2.
    \end{aligned} \label{eq:validity}
\end{align}

Moreover, we can recall that
\begin{align*}
    \sqrt{\eta \mu} &\le \sqrt{\eta L_{\phi}} \le \sqrt{C}, \quad \frac{\eta}{\mu} L_{\oS}^2 \le C.
\end{align*}
Since $C \approx 0.026117 \le \frac{1}{12}$ we have
$4 \eta L_{\phi} \le 4C \le 1 - 8C \le 1 - 8 \frac{\eta}{\mu} L_{\oS}^2$,
and therefore
\begin{align}
    \frac{4 \eta L_{\phi}}{1 - 8 \frac{\eta}{\mu} L_{\oS}^2} &\le 1. \label{eq:validityone}
\end{align}
Since $C \approx 0.026117 \le \frac{1}{24}$ we have
$16 \cdot \frac{\eta}{\mu} L_{\oS}^2 \le 16 C \le 1 - 8C \le 1 - 8 \frac{\eta}{\mu} L_{\oS}^2$, 
and therefore
\begin{align}
    \frac{8 \eta L_{\oS}^2}{1 - 8 \frac{\eta}{\mu} L_{\oS}^2} &\le \frac{\mu}{2}. \label{eq:validitytwo}
\end{align}
Since $C \approx 0.026117$ also satisfies
\begin{align*}
    0.10447 &\approx 4C \le \sqrt{C} (1 - \sqrt{C}) (1 - 8C) \approx 0.10718 
\end{align*}
and $B = \frac{\sqrt{C}}{1-C}$, we have
\begin{align*}
    4 \frac{\eta}{\mu} L_{\oS}^2 \le 4 C &\le \sqrt{C} (1 - \sqrt{C}) (1 - 8C) \\
    &\le B (1 - \sqrt{C}) (1-C) (1 - 8C) \le B \bigopen{1 - \sqrt{\eta \mu}} \bigopen{1 - \eta L_{\phi}} \bigopen{1 - 8 \frac{\eta}{\mu} L_{\oS}^2}
\end{align*}
and therefore
\begin{align}
    \frac{4 \frac{\eta^2 L_{\oS}^2}{\mu}}{1 - 8 \frac{\eta}{\mu} L_{\oS}^2} &\le B \eta \bigopen{1 - \sqrt{\eta \mu}} \bigopen{1 - \eta L_{\phi}}. \label{eq:validitythree}
\end{align}

Hence, applying \eqref{eq:validityone}, \eqref{eq:validitytwo}, and \eqref{eq:validitythree} to \eqref{eq:validity}, we have
\begin{align*}
    \eta \| \nabla \hat{\phi} (\vztilde_{k-1}) + \hat{\oS} (\vzhat_{k-1}) \|^2 &\le \hat{\phi} (\vztilde_{k-1}) + \frac{\mu}{2} \bignorm{\vztilde_{k} - \vz_{\star} + \frac{1}{\sqrt{\eta \mu}} (\vztilde_{k} - \vz_{k})}^2 \\
    &\phantom{{}\le} + B \eta \bigopen{1 - \sqrt{\eta \mu}} \bigopen{1 - \eta L_{\phi}} \| \nabla \hat{\phi} (\vztilde_{k-2}) + \hat{\oS} (\vzhat_{k-2}) \|^2
\end{align*}
which immediately implies
\begin{align*}
    \Psi_{k} &\ge \frac{\mu}{2} \bignorm{\vztilde_{k} + \frac{1}{\sqrt{\eta \mu}} \bigopen{\vztilde_{k} - \vz_{k}} - \vz_{\star}}^2 + \hat{\phi} (\vztilde_{k-1}) + (1 - \eta \mu) \sqrt{\frac{\mu}{\eta}} \bignorm{\vztilde_{k-1} - \vz_{k-1}}^{2}
\end{align*}
as desired.
\end{proof}

\cref{thm:discretize} and Lemma~\ref{lem:discretizevalid} together yield the following iteration complexity.
\begin{corollary}
    \label{cor:discretize}
    For \emph{NOD} \eqref{eq:nod} with $\eta = \Theta (\min \{ \frac{1}{L_{\phi}}, \frac{\mu}{L_{\oS}^{2}} \} )$,
    \begin{align*}
        K &\ge \Theta \bigopen{\sqrt{\frac{L_{\phi}}{\mu} + \frac{L_{\oS}^2}{\mu^2}} \cdot \log \frac{1}{\epsilon}}
    \end{align*}
    implies $\norm{\vztilde_{K} - \vz_{\star}}^2 \le \epsilon$.
\end{corollary}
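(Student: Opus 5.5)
The plan is to chain \cref{thm:discretize} with Lemma~\ref{lem:discretizevalid}. Fix $\eta = C\min\{1/L_{\phi}, \mu/L_{\oS}^2\}$, which is admissible in \cref{thm:discretize} and is of the stated order $\Theta(\min\{1/L_\phi,\mu/L_{\oS}^2\})$. Iterating $\Psi_{k+1}\le(1-\sqrt{\eta\mu})\Psi_k$ from $k=2$ gives
\begin{align*}
\Psi_K \le (1-\sqrt{\eta\mu})^{K-2}\Psi_2 \le e^{-(K-2)\sqrt{\eta\mu}}\,\Psi_2 .
\end{align*}

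The next step converts $\Psi_K$ into a bound on $\norm{\vztilde_K-\vz_\star}^2$. Lemma~\ref{lem:discretizevalid} shows that $\Psi_K$ dominates
\begin{align*}
\frac{\mu}{2}\bignorm{\vztilde_K-\vz_\star+\tfrac{1}{\sqrt{\eta\mu}}(\vztilde_K-\vz_K)}^2 .
\end{align*}
This term alone does not isolate $\vztilde_K$, so I will extract $\vztilde_K$ using $\vztilde_K-\vz_K=\tau(\vz_K-\vz_{K-1})$ together with the other nonnegative terms in the lower bound. The most direct route is to apply the Lemma at both index $K$ and $K+1$. At $K+1$ it controls $\hat\phi(\vztilde_K)\ge\frac{\mu}{2}\norm{\vztilde_K-\vz_\star}^2$ by strong convexity of $\hat\phi$. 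Hence
\begin{align*}
\norm{\vztilde_K-\vz_\star}^2\le \frac{2}{\mu}\Psi_{K+1}\le\frac{2}{\mu}e^{-(K-1)\sqrt{\eta\mu}}\Psi_2 .
\end{align*}
This is the cleanest step, and it is what I would use.

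It then suffices that $(K-1)\sqrt{\eta\mu}\ge\log\frac{2\Psi_2}{\mu\epsilon}$. With the chosen $\eta$,
\begin{align*}
\frac{1}{\sqrt{\eta\mu}}=\frac{1}{\sqrt C}\max\Bigl\{\sqrt{L_\phi/\mu},\,L_{\oS}/\mu\Bigr\}\le\frac{1}{\sqrt C}\sqrt{\frac{L_\phi}{\mu}+\frac{L_{\oS}^2}{\mu^2}} ,
\end{align*}
and the maximum is within a factor $\sqrt2$ of the square root. This yields the stated $\Theta(\sqrt{L_\phi/\mu+L_{\oS}^2/\mu^2}\,\log\frac1\epsilon)$ bound.

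The main obstacle is the dependence of the constant $\Psi_2$ on the initialization. It must be bounded by a multiple of $\norm{\vz_0-\vz_\star}^2$ (times polynomial factors in $L_\phi,L_{\oS},\mu,\eta^{-1}$) so that it contributes only to the logarithm. I would compute the first two iterates explicitly from $\vz_0=\vztilde_0=\vzhat_0$. Each of $\vz_1,\vztilde_1,\vzhat_1,\vz_2,\vztilde_2$ differs from $\vz_0$ by a bounded linear combination of gradient and operator evaluations, and Lipschitzness gives $\norm{\nabla\hat\phi(\cdot)+\hat\oS(\cdot)}\le(L_\phi+L_{\oS})\norm{\cdot-\vz_\star}$. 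One caveat is the term $\sqrt{\mu/\eta}\,\norm{\vztilde_1-\vz_1}^2$, where $\vztilde_1-\vz_1=\tau(\vz_1-\vz_0)=-\tau\eta(\nabla\hat\phi(\vz_0)+\hat\oS(\vz_0))$. This term is therefore $O(\eta^{3/2}\mu^{1/2}(L_\phi+L_{\oS})^2)\norm{\vz_0-\vz_\star}^2$, which is harmless. Also, $\theta=1/\sqrt{\eta\mu}-1$ can be large, so $\vzhat_1$ is far out, but only by a factor polynomial in the condition numbers. Absorbing all of this into $\log(\Psi_2/(\mu\epsilon))=O(\log\frac1\epsilon)$, treating problem constants and $\norm{\vz_0-\vz_\star}$ as fixed in the $\Theta$, completes the argument.
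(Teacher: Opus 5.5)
Your proposal is correct and takes the route the paper intends: the paper states the corollary as an immediate consequence of chaining the contraction $\Psi_{k+1}\le(1-\sqrt{\eta\mu})\Psi_k$ from \cref{thm:discretize} with the lower bound in Lemma~\ref{lem:discretizevalid}, and gives no further argument. Your index shift $\Psi_{K+1}\ge\hat\phi(\vztilde_K)\ge\frac{\mu}{2}\norm{\vztilde_K-\vz_\star}^2$ and your bound of $\Psi_2$ by a polynomial multiple of $\norm{\vz_0-\vz_\star}^2$ supply exactly the details the paper leaves hidden in its $\Theta$ notation.
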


\subsection{Bilinear coupling case}

Consider the \textit{bilinearly coupled} SCSC function
\[ \mathcal{L}(\vx,\vy)=g(\vx)-h(\vy)+\vx^\top \mM \vy.\]
We can use a decomposition of the saddle operator of $\mathcal{L}$ similarly to Example~\ref{ex:2} and apply NOD.
In particular, we define the \textit{scaled} saddle gradient operator
\begin{align*}
    \oT (\vx, \vy) &\coloneq \begin{bmatrix}
        \frac{1}{\sqrt{\mu_{x}}} \nabla g \bigopen{\frac{1}{\sqrt{\mu_{x}}} \vx} \\
        \frac{1}{\sqrt{\mu_{y}}} \nabla h \bigopen{\frac{1}{\sqrt{\mu_{y}}} \vy}
    \end{bmatrix}
    + \frac{1}{\sqrt{\mu_{x} \mu_{y}}} \begin{bmatrix}
        \mM \vy \\
        - \mM^{\top} \vx
    \end{bmatrix}. 
\end{align*}
We can observe that this operator is of the form $\nabla \phi + \oS$, where
\begin{align*}
    \phi(\vx, \vy) &= g \bigopen{\frac{1}{\sqrt{\mu_{x}}} \vx} + h \bigopen{\frac{1}{\sqrt{\mu_{y}}} \vy}
\end{align*}
is a $1$-strongly convex, $\max \{ \frac{L_x}{\mu_x}, \frac{L_y}{\mu_y} \}$-smooth function, and
\begin{align*}
    \oS(\vx, \vy) &= \frac{1}{\sqrt{\mu_{x} \mu_{y}}} \begin{bmatrix}
        \bm{0} & \mM \\
        - \mM^{\top} & \bm{0}
    \end{bmatrix}
    \begin{bmatrix}
        \vx \\ \vy
    \end{bmatrix}
\end{align*}
is a $\frac{L_{xy}}{\sqrt{\mu_{x} \mu_{y}}}$-Lipschitz operator (as $\| \mM \| \le L_{xy}$).
With $\eta_{x}=\frac{\eta}{\mu_x}$ and $\eta_{y}=\frac{\eta}{\mu_y}$, NOD is
\begin{align}
    \begin{aligned}
    \vx_{k+1} &= \vxtilde_{k} - \eta_{x} ( \nabla_{\vx} \mathcal{L} (\vxtilde_{k}, \vytilde_{k}) + \theta \bigopen{\nabla_{\vx} \mathcal{L} (\vxtilde_{k}, \vytilde_{k}) - \nabla_{\vx} \mathcal{L} (\vxtilde_{k}, \vytilde_{k-1})} ), \\
    \vy_{k+1} &= \vytilde_{k} + \eta_{y} ( \nabla_{\vy} \mathcal{L} (\vxtilde_{k}, \vytilde_{k}) + \theta (\nabla_{\vy} \mathcal{L} (\vxtilde_{k}, \vytilde_{k}) - \nabla_{\vy} \mathcal{L} (\vxtilde_{k-1}, \vytilde_{k})) ), \\
    \vxtilde_{k+1} &= \vx_{k+1} + \tau (\vx_{k+1} - \vx_{k}), \\
    \vytilde_{k+1} &= \vy_{k+1} + \tau (\vy_{k+1} - \vy_{k}),
    \end{aligned}
    \label{eq:bc}
\end{align}
where the simplification uses the fact that the acyclic operator $\oS$ is linear.

Since the decomposition $\oT = \nabla \phi + \oS$ satisfies $\mu = 1$ and
\begin{align*}
    L_{\phi} = \max \bigset{ \frac{L_x}{\mu_x}, \frac{L_y}{\mu_y} }, \quad L_{\oS} = \frac{L_{xy}}{\sqrt{\mu_{x} \mu_{y}}},
\end{align*}
we have the following corollary.

\begin{corollary}
    \label{cor:minimaxrate}
    Suppose that $\mathcal{L} (\vx, \vy) = g(\vx) + \vx^{\top} \mM \vy - h(\vy)$ is a bilinearly coupled function with $\mu_{x}$-strongly-convex and $L_{x}$-smooth $g(\vx)$, $\mu_{y}$-strongly-convex and $L_{y}$-smooth $h(\vy)$, and constant $\mM \in \R^{d_{x} \times d_{y}}$ with $\norm{\mM} \le L_{xy}$.
    Then the iterates of \eqref{eq:bc} with
    $\eta = \Theta (\min \{ \frac{\mu_x}{L_{x}}, \frac{\mu_y}{L_{y}}, \frac{\mu_x \mu_y}{L_{xy}^{2}} \} )$
    satisfy $\mu_x \norm{\vxtilde_K - \vx_{\star}}^2 + \mu_y \norm{\vytilde_K - \vy_{\star}}^2 \le \epsilon$ for
    \begin{align*}
    K\ge 
        \Theta \bigopen{ \sqrt{ \frac{L_{x}}{\mu_{x}} + \frac{L_{xy}^2}{\mu_{x}\mu_{y}} + \frac{L_{y}}{\mu_{y}} } \cdot \log \frac{1}{\epsilon} }.
    \end{align*}
\end{corollary}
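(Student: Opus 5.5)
The plan is to reduce \cref{cor:minimaxrate} to \cref{cor:discretize} through the change of variables that was just introduced. Set $\vx' = \sqrt{\mu_x}\,\vx$ and $\vy' = \sqrt{\mu_y}\,\vy$. The scaled operator $\oT(\vx',\vy')$ is then the saddle gradient of $\mathcal{L}$ evaluated at $(\vx'/\sqrt{\mu_x}, \vy'/\sqrt{\mu_y})$, premultiplied componentwise by $1/\sqrt{\mu_x}$ and $1/\sqrt{\mu_y}$ (the sign convention for the $\vy$-block is as in Example~\ref{ex:2}). Consequently its zero is $\vz'_\star = (\sqrt{\mu_x}\vx_\star, \sqrt{\mu_y}\vy_\star)$, where $(\vx_\star,\vy_\star)$ is the saddle point of $\mathcal{L}$.

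The first step is to check the hypotheses of \cref{thm:discretize}.
\begin{itemize}
\item $\phi(\vx',\vy') = g(\vx'/\sqrt{\mu_x}) + h(\vy'/\sqrt{\mu_y})$ has a block-diagonal Hessian structure. In the first block it is $\mu_x/\mu_x = 1$-strongly convex and $L_x/\mu_x$-smooth, and the second block behaves analogously. Hence $\mu = 1$ and $L_\phi = \max\{L_x/\mu_x, L_y/\mu_y\}$.
\item $\oS$ is skew-linear, so $\langle \oS\vz, \vz\rangle = 0$ and $\oS$ is monotone. Its norm is $\|\mM\|/\sqrt{\mu_x\mu_y} \le L_{xy}/\sqrt{\mu_x\mu_y}$, because the block matrix $\begin{bmatrix}\bm 0 & \mM\\ -\mM^\top & \bm 0\end{bmatrix}$ has operator norm $\|\mM\|$.
\end{itemize}

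The second step, which I expect to be the main bookkeeping obstacle, is to verify that NOD \eqref{eq:nod} applied to $\oT$ in the primed variables, mapped back by the inverse scaling, is exactly \eqref{eq:bc} with $\eta_x = \eta/\mu_x$ and $\eta_y = \eta/\mu_y$.
\begin{itemize}
\item The momentum updates are linear, so they commute with the diagonal scaling.
\item In the gradient step, the $\nabla\phi$ term evaluated at $\vztilde'_k$ becomes $\eta\,\mu_x^{-1/2}\nabla g(\vxtilde_k)$ after unscaling $\vx'$ by $\mu_x^{-1/2}$, which gives the factor $\eta/\mu_x$.
\item For the $\oS$ term, linearity gives $\oS(\vzhat_k) = \oS(\vztilde_k) + \theta(\oS(\vztilde_k) - \oS(\vztilde_{k-1}))$.
\item Since $\nabla_{\vx}\mathcal{L}(\vxtilde_k,\vytilde_k) - \nabla_{\vx}\mathcal{L}(\vxtilde_k,\vytilde_{k-1}) = \mM(\vytilde_k - \vytilde_{k-1})$, and similarly for the $\vy$-block, the combination of $\nabla\phi(\vztilde_k)$ and $\oS(\vzhat_k)$ reproduces the bracketed expressions in \eqref{eq:bc}.
\end{itemize}
The main care needed here is tracking the signs in the $\vy$-update, where $-\nabla_{\vy}\mathcal{L} = \nabla h(\vy) - \mM^\top\vx$.

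The third step is to invoke \cref{cor:discretize}, backed by \cref{thm:discretize} and \cref{lem:discretizevalid}. The step size $\eta = \Theta(\min\{1/L_\phi, \mu/L_{\oS}^2\})$ equals $\Theta(\min\{\mu_x/L_x, \mu_y/L_y, \mu_x\mu_y/L_{xy}^2\})$. It yields $\|\vztilde'_K - \vz'_\star\|^2 \le \epsilon$ once $K \ge \Theta\bigl(\sqrt{L_\phi + L_{\oS}^2}\,\log(1/\epsilon)\bigr)$. Two facts finish the argument:
\begin{itemize}
\item $\|\vztilde'_K - \vz'_\star\|^2 = \mu_x\|\vxtilde_K - \vx_\star\|^2 + \mu_y\|\vytilde_K - \vy_\star\|^2$.
\item $\max\{a,b\} \le a + b \le 2\max\{a,b\}$, so $L_\phi + L_{\oS}^2$ is within a constant factor of $L_x/\mu_x + L_y/\mu_y + L_{xy}^2/(\mu_x\mu_y)$.
\end{itemize}
Constants and the initial value $\Psi_2$ are absorbed into the $\Theta(\cdot)$ and the $\log(1/\epsilon)$ factor, exactly as in \cref{cor:discretize}.
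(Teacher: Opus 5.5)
Your proposal is correct and follows the same route as the paper: rescale by $\sqrt{\mu_x},\sqrt{\mu_y}$ to get $\mu=1$, $L_\phi=\max\{L_x/\mu_x,L_y/\mu_y\}$ and $L_{\oS}=L_{xy}/\sqrt{\mu_x\mu_y}$, identify NOD in the scaled variables with \eqref{eq:bc}, and invoke \cref{cor:discretize}. You supply more detail than the paper's brief discussion (the sign check in the $\vy$-block, the identity $\|\vztilde'_K-\vz'_\star\|^2=\mu_x\|\vxtilde_K-\vx_\star\|^2+\mu_y\|\vytilde_K-\vy_\star\|^2$, and the max-versus-sum comparison), and all of it is accurate.
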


This matches both the iteration complexity lower bound by \cite{zhang22lb} and the upper bounds of previously known algorithms by \cite{du2022optimal,jin22,kovalev2022accelerated}.

\subsection{Oracle models and matching lower bound} \label{subsec:discussions}

While NOD considers a decomposition $\oT = \nabla \phi + \oS$ and accesses the two components via \emph{decomposed oracles} $\nabla \phi$ and $\oS$, most prior work instead treats the operator $\oT$ as a whole, which we refer to as the \emph{single-evaluation oracle} model.

For \textit{bilinearly coupled} minimax optimization problems, these two oracle models are, in fact, equivalent. Indeed, for $ \mathcal{L} (\vx,\vy) = g(\vx) - h(\vy) + \vx^\top \mM \vy$, we have
\begin{align*}
    \begin{aligned}
    \nabla g (\vx) &= \nabla_{\vx} \mathcal{L} (\vx, \vy) - \mM \vy = 2 \nabla_{\vx} \mathcal{L} (\vx, \vy) - \nabla_{\vx} \mathcal{L} (\vx, 2 \vy) \\
    \mM \vy &= (\nabla g(\vx) + 2 \mM \vy) - (\nabla g(\vx) + \mM \vy) = \nabla_{\vx} \mathcal{L} (\vx, 2 \vy) - \nabla_{\vx} \mathcal{L} (\vx, \vy),
    \end{aligned} 
\end{align*}
and similarly for the $\vy$-component.
Thus, the decomposed oracle $(\nabla g(\vx), \nabla h(\vy))$ can be obtained from three calls/queries to the single-evaluation oracle $\oT (\vx, \vy)$, $\oT (\vx, 2 \vy)$, and $\oT (2 \vx, \vy)$, and the same holds for $(\mM \vy, -\mM^{\top} \vx)$.
Therefore, both NOD and other algorithms from previous work formulated in terms of decomposed oracles \cite{du2022optimal,jin22,kovalev2022accelerated} are implementable in the single-evaluation-oracle model with the same asymptotic iteration-complexity upper bounds in the bilinear coupling setup.
For general SCSC minimax optimization problems with \emph{non-bilinear} coupling, the above equivalence no longer holds.

Our rate for NOD matches the lower bound of \cite{zhang22lb}, which is constructed using a bilinearly coupled minimax instance.
This establishes that NOD is optimal within the class of algorithms that access decomposed oracles.
However, whether the same rate can be achieved for general non-bilinear coupling problems to match the lower bound of \cite{zhang22lb} under the single-evaluation oracle model is open.
It is possible that future work will either (i) develop improved algorithms achieving the same rate with a single-evaluation oracle, or (ii) establish stronger lower bounds based on constructing hard instances with non-bilinear coupling.

\section{Conclusion}
\label{sec:conclusion}
We introduce \textbf{N}esterov acceleration with \textbf{O}perator \textbf{D}ecomposition (NOD), a method that leverages the classical notion of Asplund-type operator decomposition to extend Nesterov's accelerated gradient descent (NAG) to the broader class of monotone inclusion problems, including minimax optimization. Under decomposed oracle access, NOD achieves an accelerated rate that matches the complexity lower bound for the bilinearly coupled minimax problems established in prior work, as discussed with further nuance in \Cref{subsec:discussions}.

Our approach represents a slight departure from the prevailing framework in the minimax optimization literature, which typically assumes access to a single-evaluation oracle as discussed in \Cref{subsec:discussions}. In this sense, NOD is not a ``pure'' first-order method. Nonetheless, by drawing on classical notions of operator decomposition, our work opens the door to new avenues for designing accelerated algorithms and suggests new research directions in minimax optimization and operator structure theory.

\clearpage
{
	\bibliography{refs}
    \bibliographystyle{plain}
}

\clearpage
\appendix

\appendix

\section{\texorpdfstring{Proof of \Cref{thm:sincoupling}}{Proof of Theorem 2.4}}
\label{app:sincoupling}

We start by establishing some base facts. First
$ \mathcal{L} (x, y) = x^2 - y^2 + \sin x \sin y$
is SCSC and smooth by explicitly examining 
\begin{align*}
    D \oT &=
    \begin{bmatrix}
        2 - \sin x \sin y & \cos x \cos y \\
        - \cos x \cos y & 2 + \sin x \sin y
    \end{bmatrix}
\end{align*}
and noting $\mu_x = \mu_y = 1$, $L_x = L_y = 3$, and $L_{xy} = 1$.
Moreover, 
\begin{align*}
    \left\lVert
    \begin{bmatrix}
        2 - \sin x \sin y & \cos x \cos y \\
        - \cos x \cos y & 2 + \sin x \sin y
    \end{bmatrix}
    \right\rVert
    &= \sqrt{4 + \cos^2 x \cos^2 y} + \left\lvert \sin x \sin y \right\rvert \le 3,
\end{align*}
so $\mathcal{L}$ is (jointly) $3$-smooth in $(x, y)$.
Next, note that $\phi$ is convex since $h$ is convex, which follows from $h''=2-|\sin\cdot|\ge 0$. Also, $\oS$ is $2$-Lipschitz as it is the sum of two $1$-Lipschitz operators
\begin{align*}
    \oS (x, y)  &= \begin{bmatrix}
        \cos x \sin y \\
        - \sin x \cos y
    \end{bmatrix}
    +
    \begin{bmatrix}
        \int_0^x \lvert \sin t \rvert 
        \phantom{ \frac{\frac{}{}}{\frac{}{}} }\!\!
        \dd t \\
        \int_0^y \lvert \sin t \rvert
        \phantom{ \frac{\frac{}{}}{\frac{}{}} }\!\!
        \dd t
    \end{bmatrix},
\end{align*}
and $\oS$ is single-valued and monotone as
\begin{align*}
\mathrm{symm}\big(D\oS(x,y)\big) &= \frac{D\oS(x,y)  + D\oS^{\top}(x,y)}{2} = \begin{bmatrix}
     \lvert \sin x \rvert - \sin x \sin y \!\!\!\!\!\!\!\!& 0 \\
        0 & \!\!\!\!\!\!\!\!\lvert \sin y \rvert + \sin x \sin y \
    \end{bmatrix} \succeq 0.
\end{align*}

We want to show that $\oT = \nabla \phi + \oS$ is an Asplund decomposition, and it suffices to show that $\oS$ is acyclic, i.e., whenever there exists a convex function $k(x, y)$ such that $\oS - \partial k$ is monotone, $k$ must be affine.
We first show the following lemma that ensures that $k$ is smooth.

\begin{lemma}
    \label{lem:kissmooth}
    Suppose that $\oA\colon\mathbb{R}^d
    \rightarrow\mathbb{R}^d$ is an $L$-Lipschitz, monotone operator on $\R^{d}$, $k\colon\mathbb{R}^d\rightarrow\mathbb{R}$ is CCP, and $\oA - \partial k$ is monotone.
    Then the function $k$ is $L$-smooth.
\end{lemma}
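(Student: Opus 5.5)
Write $\oB := \oA - \partial k$, which is monotone by hypothesis. The plan is first to show that $\partial k$ is single-valued with full domain, so that $k$ is differentiable, and then to show that $\nabla k$ is $L$-Lipschitz through a cocoercivity-type bound.

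\textbf{Differentiability.} Wherever $\partial k(\vz)$ is nonempty, take $\vg, \vg' \in \partial k(\vz)$ and pair them with the same point $\vz'$ in the monotonicity of $\oB$. Expanding the inequalities for $\oA(\vz)-\vg$ and $\oA(\vz)-\vg'$ and using the $L$-Lipschitzness of $\oA$, I will try to force $\vg = \vg'$, for instance by choosing $\vz' = \vz + t\vw$ and letting $t \to 0$. A cleaner route, if this stalls, is to notice that monotonicity of $\oB$ gives
\begin{align*}
\langle \vg - \vg', \vz - \vz' \rangle \le \langle \oA(\vz) - \oA(\vz'), \vz - \vz' \rangle \le L\norm{\vz-\vz'}^2
\end{align*}
for all $\vg \in \partial k(\vz)$ and $\vg' \in \partial k(\vz')$. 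Thus $\partial k$ is monotone and also satisfies $\langle \vg-\vg',\vz-\vz'\rangle \le L\norm{\vz-\vz'}^2$. This says that $\frac{L}{2}\norm{\cdot}^2 - k$ has a monotone subdifferential-type map on $\operatorname{dom}\partial k$. In other words, $k$ is convex and its ``second-order behavior'' is bounded by $L$.

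\textbf{Finiteness of the domain.} We also need $\operatorname{dom} k = \R^d$, which does not come for free: an indicator function $k$ would make $\oB$ fail to be monotone only under suitable conditions. I would argue through the normal cone at a boundary point. Near a boundary point of $\operatorname{dom} k$, $\partial k$ contains rays $\vg + s\,\vn$ with $s \to \infty$, where $\vn$ is a normal direction. Pairing such a point $\vz$ with an interior point $\vz' = \vz - \epsilon \vn$ gives $\langle \vg + s\vn - \vg', \epsilon \vn\rangle \le L\epsilon^2\norm{\vn}^2$, which fails for large $s$. This is a contradiction, so $\operatorname{dom}\partial k = \R^d$. Hence $k$ is finite everywhere, which also handles the caveat that the statement writes $k\colon\R^d\to\R$.

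\textbf{Main step: the Lipschitz bound.} This step, which I expect to be the main obstacle, is upgrading the one-sided bound to $\norm{\vg - \vg'} \le L\norm{\vz-\vz'}$. The function $\psi = \frac{L}{2}\norm{\cdot}^2 - k$ is convex, since its subdifferential-type map $L\,\mathrm{id} - \partial k$ is monotone and $\psi$ is finite and continuous on $\R^d$. Then $k$ is convex and $\frac{L}{2}\norm{\cdot}^2 - k$ is convex, which gives the standard two-sided quadratic bound
\begin{align*}
0 \le k(\vz') - k(\vz) - \langle \vg, \vz'-\vz\rangle \le \tfrac{L}{2}\norm{\vz'-\vz}^2 .
\end{align*}
From this upper quadratic bound, the standard argument (e.g., Nesterov's theorem on equivalent characterizations) yields differentiability of $k$, since the subgradient is then unique. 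It also yields cocoercivity,
\begin{align*}
\norm{\nabla k(\vz)-\nabla k(\vz')}^2 \le L\langle \nabla k(\vz)-\nabla k(\vz'),\vz-\vz'\rangle,
\end{align*}
and hence $L$-Lipschitzness of $\nabla k$. This shows that $k$ is $L$-smooth.
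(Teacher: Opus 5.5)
\textbf{The gap: convexity of $\psi$ is asserted, not proved.} In your main step you claim that $\psi=\frac{L}{2}\norm{\cdot}^2-k$ is convex ``since its subdifferential-type map $L\,\mathrm{id}-\partial k$ is monotone.'' At that point you have not shown that $k$ is differentiable; your differentiability paragraph stops at ``I will try.'' So $L\,\mathrm{id}-\partial k$ is not known to be a (sub)gradient of $\psi$ in any sense you have established. The implication ``monotone generalized gradient $\Rightarrow$ convex'' for a nonsmooth function is a genuine theorem (e.g.\ for Clarke's generalized gradient), not the elementary smooth fact you are implicitly using. This is exactly where the nonsmoothness of $k$ has to be handled, so the one nontrivial point is asserted rather than proved.

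You can patch it along your own route. Restrict to the line $t\mapsto k(\vz+t\vw)$. It is absolutely continuous, and at all but countably many $t$ its derivative is $\inner{\vg_t,\vw}$ for any $\vg_t\in\partial k(\vz+t\vw)$. Integrating $\inner{\vg_t-\vg_0,\vw}\le Lt\norm{\vw}^2$ over $[0,1]$ gives the quadratic upper bound for every $\vg_0\in\partial k(\vz)$ directly. Uniqueness of the subgradient then follows as you say.

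\textbf{A simpler repair: the idea you set aside.} Your first sketch is precisely the paper's proof, and its first step is the same inequality you derived. Take $\vg,\vg'\in\partial k(\vz)$ and $\vg_t\in\partial k(\vz+t\vw)$; the latter is nonempty because $k$ is finite. Monotonicity of $\partial k$ together with your inequality gives $0\le\inner{\vg_t-\vg,t\vw}\le Lt^2\norm{\vw}^2$, and likewise for $\vg'$. Subtracting yields $\lvert\inner{\vg-\vg',\vw}\rvert\le Lt\norm{\vw}^2\to 0$. Note that the lower bound here comes from convexity of $k$. Monotonicity of $\oB$ and Lipschitzness of $\oA$ alone, as in your first sketch, give only one-sided bounds and cannot force $\vg=\vg'$.

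Once $\nabla k$ exists, the condition $0\le\inner{\nabla k(\vz)-\nabla k(\vz'),\vz-\vz'}\le L\norm{\vz-\vz'}^2$ is one of the equivalent characterizations of $L$-smoothness in Nesterov's Theorem~2.1.5. The paper simply cites it, and your $\psi$/cocoercivity chain then becomes routine.

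\textbf{Delete the finiteness paragraph.} The condition $\dom k=\R^d$ is part of the hypothesis $k\colon\R^d\to\R$. The argument there also fails for extended-valued $k$. Take $k=\iota_{\{\bm{0}\}}$, the indicator of $\{\bm{0}\}$. The graph of $\oA-\partial k$ is $\{\bm{0}\}\times\R^d$, which is monotone. There is no point $\vz-\epsilon\bm{n}$ of $\dom\partial k$ to pair with, and $k$ is not smooth. So finiteness is a genuine assumption, not something you can derive.
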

\begin{proof}
    First, if $z_1, z_2 \in \R^{d}$,
    $g_1 \in \partial k (z_1)$, and $g_2 \in \partial k (z_2)$, then, 
    \begin{align*}
        0\stackrel{\text{(i)}}{\le} \langle g_1 - g_2, z_1 - z_2 \rangle \stackrel{\text{(ii)}}{\le} \langle \oA (z_1) - \oA (z_2), z_1 - z_2 \rangle \stackrel{\text{(iii)}}{\le} L \norm{z_1 - z_2}^2,
    \end{align*}
    where (i) follows from convexity of $k$, (ii) follows from monotonicity of $\oA-\partial k$, and (iii) follows from $L$-Lipschitzness of $\oA$.
    
    This implies $\partial k$ is single-valued by the following reasoning. Suppose $g_1, g_1' \in \partial k(z)$ and $g_2 \in \partial k(z + tv)$ for $v \in \R^{d}$ with $\norm{v} = 1$ and $t > 0$. Then, 
    \begin{align*}
        0 &\le \langle g_2-g_1, tv \rangle \le Lt^2, \qquad
        0 \le \langle g_2-g_1', tv \rangle \le Lt^2,
    \end{align*}
    and therefore
    \begin{align*}
        -Lt &\le \langle g_1-g_1', v \rangle \le Lt.
    \end{align*}
    Since this holds for all $t > 0$ and $v$ with $\| v \| = 1$, we must have $g_1 = g_1'$. Since single-valued, we can write $\partial k =\nabla k$.

    Finally, by Theorem~2.1.5 of \cite{nesterov04}, 
    \[
    0 \le \langle \nabla k(z_1) - \nabla k(z_2), z_1 - z_2 \rangle \le L \norm{z_1 - z_2}^2,
    \]
    implies $k$ is $L$-smooth.
\end{proof}

Suppose that there exists a convex $k\colon \mathbb{R}^2\rightarrow\mathbb{R}$ such that $\oS - \nabla k$ is monotone. 
(Note that $k\in C^1$ by \Cref{lem:kissmooth}, which is why we write $\nabla k$ instead of $\partial k$.)
We aim to show that $\nabla k$ must be constant everywhere.
We first prove this for the case $k \in C^2$, and then we extend the argument to $k \notin C^2$ using a mollifier argument.
\begin{figure}
\centering
\begin{tikzpicture}[scale=0.35,>=stealth,every node/.style={font=\footnotesize}]

\draw[->,thick] ({-2.5*pi},0) -- ({2.5*pi},0) node[right] {$x$};
\draw[->,thick] (0,{-2.5*pi}) -- (0,{2.5*pi}) node[above] {$y$};

\foreach \k/\lab in {-2/{-2\pi}, -1/{-\pi}, 1/{\pi}, 2/{2\pi}} {
  \draw ({\k*pi},0) -- ({\k*pi},0);
  \node[below left=0.5pt] at ({\k*pi},0) {$\lab$};
}

\foreach \k/\lab in {-2/{-2\pi}, -1/{-\pi}, 1/{\pi}, 2/{2\pi}} {
  \draw (0,{\k*pi}) -- (0,{\k*pi});
  \node[below left=0.5pt] at (0,{\k*pi}) {$\lab$};
}

\foreach \k in {-2,-1,0,1,2} {
  \draw[blue,very thick] ({-2.1*pi}, {\k*pi}) -- ({2.1*pi}, {\k*pi});
}

\foreach \k in {-2,-1,0,1,2} {
  \draw[red,very thick] ({\k*pi}, {-2.1*pi}) -- ({\k*pi}, {2.1*pi});
}

\foreach \i in {-2,-1,0,1} {
  \foreach \j in {-2,-1,0,1} {
    \pgfmathtruncatemacro{\parity}{mod(\i+\j,2)}
    \ifnum\parity=0
      \draw[red,very thick]
        ({\i*pi},{(\j+0.5)*pi}) -- ({(\i+1)*pi},{(\j+0.5)*pi});
    \else
      \draw[blue,very thick]
        ({(\i+0.5)*pi},{\j*pi}) -- ({(\i+0.5)*pi},{(\j+1)*pi});
    \fi
  }
}

\node[below left] at (0,0) {$0$};

\node[] at (12,3) {$\nabla^2 k (x, y) = {\color{red}\begin{bmatrix}
        0 & 0 \\ 0 & *
    \end{bmatrix}} $};

\node[] at (12,-3) {$\nabla^2 k (x, y) = {\color{blue}\begin{bmatrix}
        * & 0 \\ 0 & 0
    \end{bmatrix}} $};

\end{tikzpicture}
\caption{Points at which the Hessian $\nabla^2 k(x, y)$ has at most one nonzero entry in.
The {\color{red}\textbf {red}} points satisfy \eqref{eq:redlines} and the {\color{blue}\textbf {blue}} points satisfy \eqref{eq:bluelines}.}
\label{fig:redandblue}
\end{figure}
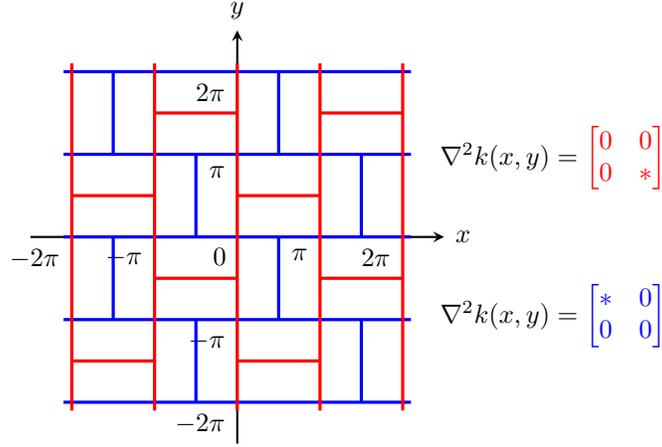

\paragraph{Case 1.}
Assume $k\in C^2$.
Since $\oS - \nabla k$ is monotone, 
\begin{align*}
    &\mathrm{symm}\big(D\oS\big) - \nabla^2 k = \begin{bmatrix}
        \lvert \sin x \rvert - \sin x \sin y - \partial_x^2 k & - \partial_x \partial_y k\\
        - \partial_x \partial_y k & 
      \lvert \sin y \rvert + \sin x \sin y - \partial_y^2 k
    \end{bmatrix} \succeq 0.
\end{align*}

Consider \cref{fig:redandblue} and note that the {\color{red}\textbf {red}} points satisfy $\lvert \sin x \rvert - \sin x \sin y = 0$ which immediately requires $\partial_x^2 k(x, y) =\partial_x \partial_y k(x, y) = 0$ to ensure $\mathrm{symm}\big(D\oS\big)  - \nabla^2 k \succeq 0$. A similar argument also holds for the {\color{blue}\textbf {blue}} points.
Therefore, $k$ satisfies
\begin{align}
    \nabla^2 k (x, y) &= {\begin{bmatrix}
        0 & 0 \\ 0 & *
    \end{bmatrix}} \quad     {\color{red}
\text{if either } 
    \begin{cases}
        x = n \pi \\
        x \in [2n \pi, (2n+1) \pi], y = \bigopen{2m + \frac12} \pi \\
        x \in [(2n-1) \pi, 2n \pi], y = \bigopen{2m - \frac12} \pi
    \end{cases}}
    \label{eq:redlines} \\
    \nabla^2 k (x, y) &= {\begin{bmatrix}
        * & 0 \\ 0 & 0
    \end{bmatrix}} \quad     {\color{blue}
\text{if either } \begin{cases}
        y = m \pi \\
        y \in [2m \pi, (2m+1) \pi], x = \bigopen{2n - \frac12} \pi \\
        y \in [(2m-1) \pi, 2m \pi], x = \bigopen{2n + \frac12} \pi
    \end{cases}}
    \label{eq:bluelines}
\end{align}
where $n, m \in \sZ$, and $*$ denotes possibly nonzero entries.

Since $\nabla k $ is continuously differentiable, we can use the fundamental theorem of calculus to get
\begin{align*}
    \nabla k \left(\frac{\pi}{2}, 0\right) - \nabla k \left(0, 0\right) &= \int_0^{\frac{\pi}{2}} \nabla^2 k (t, 0) \begin{bmatrix}
        1 \\ 0
    \end{bmatrix} \dd t 
    =
    \int_0^{\frac{\pi}{2}} 
    {\color{blue}\begin{bmatrix}
        * & 0 \\ 0 & 0
    \end{bmatrix}}
    \begin{bmatrix}
        1 \\ 0
    \end{bmatrix} \dd t 
    = \begin{bmatrix}
        *  \\ 0
    \end{bmatrix},  \\
    \nabla k \left(\frac{\pi}{2}, -\pi\right) - \nabla k \left(\frac{\pi}{2}, 0\right) &= \int_0^{-\pi} \nabla^2 k \left(\frac{\pi}{2}, t\right) \begin{bmatrix}
        0 \\ 1
    \end{bmatrix} \dd t 
    = \int_0^{-\pi} 
    {\color{blue}\begin{bmatrix}
        * & 0 \\ 0 & 0
    \end{bmatrix}} \begin{bmatrix}
        0 \\ 1
    \end{bmatrix} \dd t 
    = \begin{bmatrix}
        0  \\ 0
    \end{bmatrix},  \\
    \nabla k \left(0, -\pi\right) - \nabla k \left(\frac{\pi}{2}, -\pi\right) &= \int_{\frac{\pi}{2}}^{0} \nabla^2 k \left(t, -\pi\right) \begin{bmatrix}
        1 \\ 0
    \end{bmatrix} \dd t = \int_{\frac{\pi}{2}}^{0} 
    {\color{blue}\begin{bmatrix}
        * & 0 \\ 0 & 0
    \end{bmatrix}}
    \begin{bmatrix}
        1 \\ 0
    \end{bmatrix} \dd t = \begin{bmatrix}
        *  \\ 0
    \end{bmatrix}.
\end{align*}

The sum of these three line integrals yields
\begin{align}
    \nabla k \left(0, -\pi\right) - \nabla k \left(0, 0\right) &= \begin{bmatrix}
        *  \\ 0
    \end{bmatrix}. \label{eq:fot1}
\end{align}
However, we also have
\begin{align}
    \nabla k \left(0, -\pi\right) - \nabla k \left(0, 0\right) &= \int_0^{- \pi} \nabla^2 k (0, t) 
    \begin{bmatrix}
        0 \\ 1
    \end{bmatrix} \dd t=
    \int_0^{- \pi}     {\color{red}\begin{bmatrix}
        0 & 0 \\ 0 & *
    \end{bmatrix}}
    \begin{bmatrix}
        0 \\ 1
    \end{bmatrix} \dd t = \begin{bmatrix}
        0 \\ * 
    \end{bmatrix}.\label{eq:fot2}
\end{align}
Since \eqref{eq:fot1} and \eqref{eq:fot2} have the $*$'s in mutually exclusive coordinates, it must be that the $*$ of \eqref{eq:fot2} is, in fact, $0$, and $\nabla k\left(0, 0\right) = \nabla k\left(0, -\pi\right)$.
Further, we argue that
\[
\nabla k\left(0, 0\right) =
\nabla k\left(0, t\right) =
\nabla k\left(0, -\pi\right),\qquad\forall\,t\in [-\pi,0].
\]
This is because 
\[
    \nabla k (0, t) - \nabla k(0, 0)
    = 
    \begin{bmatrix}
        0 \\ *(t)
    \end{bmatrix}
\]
holds by the same reasoning as before, and the map
\[
t\mapsto *(t) =\frac{\dd}{\dd t}k(te_2) -
\langle e_2,\nabla k(0, 0)\rangle 
,
\]
where $e_2=(0,1)\in\mathbb{R}^2$,
must be non-decreasing by convexity of the function $t\mapsto k(te_2)$.
Since $*(-\pi)=*(0)$, we must have $*(-\pi)=*(t)=*(0)$ for all $t\in [-\pi,0]$.
We can apply this argument for any {\color{red}\textbf {red}} or {\color{blue}\textbf {blue}} segment in \cref{fig:redandblue} and conclude that $\nabla k(x,y)$ is constant on all such segments.

Finally, we note that any point in $\mathbb{R}^2$ is either on a 
{\color{red}\textbf {red}} or {\color{blue}\textbf {blue}} segment, or is enclosed by a rectangle formed by such segments. We conclude that $\nabla k$ is globally constant, i.e., that $k$ is globally affine, by appealing to the following Lemma~\ref{lem:box-constant}. So, $\oS$ is acyclic.

\begin{lemma}
\label{lem:box-constant}
Let $k\colon\mathbb{R}^2\rightarrow\mathbb{R}$ be convex and differentiable.
If $\nabla k$ is constant along the four sides of an axis-aligned rectangle, then $\nabla k $ is also constant throughout the rectangle, including the interior.
\end{lemma}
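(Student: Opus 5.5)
Let $R=[a_1,b_1]\times[a_2,b_2]$ and let $\vg\in\R^2$ be the common value of $\nabla k$ on $\partial R$. The plan is to subtract the affine function $\ell(\vz)=\langle \vg,\vz\rangle$ and set $\tilde k = k-\ell$. Then $\tilde k$ is convex and differentiable, and $\nabla\tilde k=\bm{0}$ on $\partial R$. Since $\partial R$ is connected and $\tilde k$ has zero gradient along it (integrate along the sides), $\tilde k$ equals a constant $c$ on $\partial R$. It therefore suffices to show $\nabla\tilde k=\bm{0}$ on all of $R$.

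The key observation is that a convex differentiable function is minimized exactly at points where its gradient vanishes. Since $\nabla\tilde k(\vz_0)=\bm{0}$ at any boundary point $\vz_0$, the first-order characterization of convexity gives $\tilde k(\vz)\ge \tilde k(\vz_0)+\langle\bm{0},\vz-\vz_0\rangle = c$ for all $\vz\in\R^2$. So $c$ is the global minimum of $\tilde k$.

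For the upper bound, fix an interior point $\vz=(x,y)$. It lies on the horizontal segment joining $(a_1,y)$ and $(b_1,y)$, both in $\partial R$. Writing $\vz$ as a convex combination of these endpoints and using convexity gives $\tilde k(\vz)\le c$. Hence $\tilde k\equiv c$ on $R$.

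Then $\tilde k$ is constant on $R$, so its gradient vanishes on $\operatorname{int}R$. Together with the boundary hypothesis, $\nabla k=\vg$ throughout $R$. Alternatively, every point of $R$ is a minimizer of the differentiable convex function $\tilde k$ on $\R^2$, so $\nabla\tilde k=\bm{0}$ there directly; this also handles the boundary.

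I expect no serious obstacle. The only step needing care is deducing that $\tilde k$ is constant on $\partial R$ from $\nabla\tilde k=\bm{0}$ there. This requires differentiability along each side, which is immediate from the fundamental theorem of calculus applied to $t\mapsto\tilde k$ restricted to a side, whose derivative is $\langle\nabla\tilde k,\text{direction}\rangle=0$. A degenerate rectangle, where a side has zero length, is covered by the same argument.
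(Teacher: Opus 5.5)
Your proof is correct, but it takes a different route from the paper. The paper argues one gradient component at a time. Convexity makes $t\mapsto\partial_x k(t,y)$ non-decreasing along each horizontal line, and since it takes the same value on the left and right sides, it must be constant across the rectangle; vertical lines handle $\partial_y k$ the same way. You argue instead at the level of function values. The set $\{\vz:\nabla k(\vz)=\vg\}$ is exactly the set of global minimizers of the convex function $k-\langle\vg,\cdot\rangle$, hence convex, so it contains $\operatorname{conv}(\partial R)=R$. The step you flagged as delicate is in fact unnecessary: every boundary point has zero gradient for $\tilde k$ and is therefore a global minimizer, so all boundary values equal $\min\tilde k$ without integrating along the sides. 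Your route buys generality. It uses only the two vertical sides (indeed any set whose convex hull contains the point), works verbatim in any dimension, and does not depend on axis alignment. In the paper's application it would let one pass directly from the lines $x=n\pi$ to all of $\R^2$ without any rectangle decomposition. The paper's route, in exchange, needs only componentwise information ($\partial_x k$ agreeing on the left and right sides, $\partial_y k$ on the top and bottom). It also reuses the monotone-directional-derivative argument already used in Case 1 to show constancy along individual segments.
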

\begin{proof}
Write $\{(x_0, y_0), (x_1, y_0), (x_0, y_1),(x_1, y_1)\}$ to denote the four corners of the axis-aligned rectangle.
As before, the map
\begin{align*}
    t \mapsto \langle e_i, \nabla k(z+te_i) \rangle = \frac{\dd}{\dd t} k(z+te_i)
\end{align*}
is a non-decreasing function of $t$ by convexity of the function $t \mapsto k(z+te_i)$ for any fixed $z \in \mathbb{R}^{2}$ and unit vectors $e_1 = (1, 0) \in \mathbb{R}^2$ and $e_2 = (0, 1) \in \mathbb{R}^2$.

Then, considering $t \mapsto \langle e_1,\nabla k(x_0 +t(x_1-x_0), y)\rangle$ for $t\in[0,1]$ and $y\in [y_0,y_1]$, the non-decreasing function must in fact be constant because both of the endpoints are on the sides of an axis-aligned rectangle.
That is, $ \langle e_1,\nabla k(x,y)\rangle$ is constant throughout the interior of the rectangle.
By analogous reasoning, we also conclude that $ \langle e_2,\nabla k(x,y)\rangle$ is constant throughout the interior of the rectangle, so $\nabla k$ is constant throughout the interior of the rectangle.
\end{proof}

\paragraph{Case 2.}
Suppose $k\notin C^2$.
Note that $k$, being $2$-smooth by \cref{lem:kissmooth}, is once continuously differentiable, and thus $\nabla k$ is differentiable (Lebesgue) almost everywhere by Rademacher's theorem.
However, we cannot directly use the fundamental theorem of calculus as in Case 1.
Let $\varphi: \R^2 \rightarrow \R$ be a $2$-dimensional rotationally symmetric $C^\infty$ mollifier with compact support of diameter $1$, $\varphi \ge 0$, $\int_{\R^{2}} \varphi = 1$, and $\varphi_{\epsilon} (z) := \epsilon^{-2} \varphi( z / \epsilon )$, satisfying $\varphi_{\epsilon} \rightarrow \delta$ as $\epsilon\rightarrow 0$, where $\delta$ is the Dirac delta function.
Define
\begin{align*}
    k_{\epsilon} (z) &:= k * \varphi_{\epsilon} (z) = \int_{\R^{2}} \varphi_{\epsilon} (u) k (z - u) \, \dd u, \\
    \oS_{\epsilon} (z) &:= \oS * \varphi_{\epsilon} (z) = \int_{\R^{2}} \varphi_{\epsilon} (u) \oS (z - u) \, \dd u, \\
    \mathrm{symm}\big({D\oS_{\epsilon} (z)}\big) &:= \frac{D\oS_{\epsilon}(z) + D\oS_{\epsilon}^{\top}(z)}{2} = \bigopen{\frac{D\oS + D\oS^{\top}}{2}} * \varphi_{\epsilon} (z),
\end{align*}
where $z = (x, y)$.

Since the convolution is linear, $\varphi_{\epsilon} \ge 0$, and $\mathrm{symm}\big(D\oS\big) - \nabla^2 k \succeq 0$ almost everywhere, we have $\mathrm{symm}\big(D\oS_{\epsilon}\big) - \nabla^2 k_{\epsilon} \succeq 0$ everywhere.

Let
\[
\xi(x,y)\coloneqq\lvert \sin x \rvert - \sin x \sin y,
\]
and note that $\xi$ is a $2$-Lipschitz function.
Let $(x, y)$ be a  {\color{red}\textbf {red}} point in  \cref{fig:redandblue}, so it satisfies the condition of \eqref{eq:redlines}.
Then $\xi(x,y)=0$ and
\[
\xi_\epsilon(x,y) \coloneqq \xi * \varphi_\epsilon(x,y) \le 2\epsilon.
\]
From $\mathrm{symm}\big(D\oS_{\epsilon}\big) - \nabla^2 k_{\epsilon} \succeq 0$, or equivalently
\begin{align*}
    \begin{bmatrix}
    \xi_\epsilon(x,y)- \partial_x^2 k_{\epsilon}(x,y)  & - \partial_x \partial_y k_{\epsilon}(x,y)  \\
        - \partial_x \partial_y k_{\epsilon} (x, y) & \xi_\epsilon(y,-x) - \partial_y^2 k_{\epsilon} (x, y)
    \end{bmatrix} \succeq 0,
\end{align*}
we have $0\le \partial_x^2 k_{\epsilon} (x, y) \le 2 \epsilon$ for any {\color{red}\textbf {red}} point $(x, y)$ satisfying \eqref{eq:redlines}.
Considering the other diagonal entry, we have
\begin{align*}
0\le 
\xi_\epsilon(y,-x)- \partial_y^2 k_{\epsilon} (x, y) &\le\xi_\epsilon(y,-x)\le 2.
\end{align*}
Finally, the determinant of the positive semidefinite matrix should be nonnegative, so
\begin{align*}
    |\partial_x \partial_y k_{\epsilon} (x, y)| \le 2\sqrt{\epsilon}.
\end{align*}
Therefore we can observe that for any $(x, y)$ satisfying \eqref{eq:redlines},
\begin{align*}
    \nabla^2 k_{\epsilon} (x, y) & = \begin{bmatrix}
        \gO (\epsilon) & \gO (\epsilon^{1/2}) \\ \gO (\epsilon^{1/2}) & \gO(1)
    \end{bmatrix}
    \to \begin{bmatrix}
        0 & 0 \\ 0 & * 
    \end{bmatrix} \quad \text{as} \ \ \epsilon \to 0.
\end{align*}
(where $\mathcal{O}$ is uniform over all of the {\color{red}\textbf {red}} points).
The same analysis holds for the
{\color{blue}\textbf {blue}} points $(x, y)$ satisfying \eqref{eq:bluelines}.

Now we can replicate the steps of Case 1 with the mollifier.
Consider the line integral across the {\color{red}\textbf {red}} line segment
\begin{align*}
    \nabla k (0, -\pi) - \nabla k (0, 0) 
    &= \lim_{\epsilon \to 0} \bigopen{ \nabla k_{\epsilon} \left( 0, -\pi \right) - \nabla k_{\epsilon} \left( 0, 0 \right)} \\
    &= \lim_{\epsilon \to 0} \int_0^{- \pi} 
    \begin{bmatrix}
        \gO (\epsilon) & \gO (\epsilon^{1/2}) \\ \gO (\epsilon^{1/2}) & \gO(1)
    \end{bmatrix} 
    \begin{bmatrix}
        0 \\ 1
    \end{bmatrix} \dd t = \begin{bmatrix}
        0 \\ * 
    \end{bmatrix},
\end{align*}
where we can evaluate the limit under the integral by the dominated convergence theorem, since $\mathcal{O}(\epsilon)$ and $\mathcal{O}(\sqrt{\epsilon})$ are uniform over the domain of the integral.
Applying this argument for all steps in Case 1 involving the fundamental theorem of calculus (using integrals of the {\color{red}\textbf {red}} or {\color{blue}\textbf {blue}} Hessians), we get the same conclusion as in \eqref{eq:fot1} and \eqref{eq:fot2}.
Therefore $\nabla k$ is constant on the colored points of \cref{fig:redandblue}, and therefore constant on $\mathbb{R}^2$ by \cref{lem:box-constant}.
\hfill$\square$

\end{document}